  \documentclass[10pt]{article}
  \usepackage{amsmath,amsthm,amsfonts,amssymb,latexsym,graphicx} %
  \usepackage[noadjust]{cite}  %
  \usepackage{algorithm}
  \usepackage{algpseudocode}  %
  \usepackage[pdfpagemode=UseNone,pdfstartview=FitH]{hyperref}
  \newcommand{\Extra}[1]{}

\usepackage{stmaryrd} %
\usepackage{tikz} %
\usepackage{forest}
\usepackage{algorithm}
\usepackage{algpseudocode}  %
\usepackage[pdfpagemode=UseNone,pdfstartview=FitH]{hyperref}

\allowdisplaybreaks[4]

  \newtheorem{theorem}{Theorem}
  \newtheorem{lemma}[theorem]{Lemma}
  \newtheorem{corollary}[theorem]{Corollary}
  \newtheorem{proposition}[theorem]{Proposition}
  \theoremstyle{definition}

  \newtheorem{remark}[theorem]{Remark}

\renewcommand{\P}{\mathbb{P}}

\newcommand{\KL}{\textrm{KL}}

\newcommand{\R}{\mathbb{R}}  %

\DeclareMathOperator{\lb}{lb}  %
\DeclareMathOperator{\Do}{do}  %

\usepackage{accents}
\newlength{\dhatheight}
\newcommand{\dhat}[1]{%
  \settoheight{\dhatheight}{\ensuremath{\hat{#1}}}%
  \addtolength{\dhatheight}{-0.35ex}%
  \hat{\vphantom{\rule{1pt}{\dhatheight}}%
  \smash{\hat{#1}}}}

  \title{Confidence intervals for causal effects in sequential decision making}
  \author{Vladimir Vovk and Ruodu Wang}

\begin{document}
\maketitle

\begin{abstract}
  We derive confidence intervals and confidence sequences
  for causal effects in situations where the back-door criterion
  is applicable.
  Our tightest confidence intervals hold in the standard setting
  where the training data consists of IID observations
  over a system described by a given causal diagram.
  When interventions are allowed to depend on the past data,
  our confidence intervals become wider and involve a term
  coming from the law of the iterated logarithm,
  even where the number of observations is known in advance.
  In the sequential setting where the number of observations is not given,
  our confidence intervals, arranged into a confidence sequence for causal effects,
  involve more iterated logarithm terms and become even wider.

     The version of this paper at \url{http://gtfp.net} (Working Paper 68)
     is updated most often.
\end{abstract}

\section{Introduction}

A major limitation of many results
in causal inference is that they assume, implicitly or explicitly,
IID (independent and identically distributed) observations over a causal system.
This limitation is shared by \cite{Vovk/Wang:arXiv2603full},
where we derive prediction sets in situations
covered by the back-door and front-door criteria
\cite[Theorems 3.3.2 and 3.3.4]{Pearl:2009}
(one section of \cite{Vovk/Wang:arXiv2603full} goes beyond the IID picture
but only slightly).
It was noted in \cite{Vovk:1996Gam} that the limitation
can be easily overcome by applying
limit theorems of probability theory,
but the results there (such as \cite[Theorem 1]{Vovk:1996Gam})
are asymptotic.
In this paper we derive finite-sample confidence intervals
in natural non-IID settings.

We start in Sect.~\ref{sec:effect}
by giving the definition of the causal effect
adapted to the back-door and front-door criteria.
In Sect.~\ref{sec:IID} we consider the simplest IID setting
giving the narrowest confidence intervals.
Limitations of this setting are pointed out in, e.g.,
\cite[Sect.~1]{Vovk:1996Gam} and \cite[end of Sect.~3.6.1]{Pearl:2009}.

In Sect.~\ref{sec:core} we drop the assumption of IID data,
which leads to the appearance of an iterated logarithm term in our confidence intervals.
This is developed further in Sect.~\ref{sec:anytime};
there we make our confidence intervals anytime-valid,
which leads to further iterated logarithm terms.
In Sects.~\ref{sec:core}--\ref{sec:anytime} we only consider the back-door criterion.

This paper was motivated by the difficulty of applying
the methods developed in \cite{Vovk/Wang:arXiv2603full}
to the most natural setting of sequential causal inference
(which we called the ``strong interpretation'' of causal diagrams).
The methods of this paper are completely different,
and we are targeting confidence intervals rather than prediction sets,
which were targeted in \cite{Vovk/Wang:arXiv2603full}.
In Sect.~\ref{sec:prediction} we briefly discuss derivation of prediction sets
from our confidence intervals,
but this is likely to lead to much more conservative prediction sets
in the IID setting as compared with \cite{Vovk/Wang:arXiv2603full}.

Finally, Sect.~\ref{sec:conclusion} concludes and lists some directions of further research.

\section{Causal effect}
\label{sec:effect}

\begin{figure}
  \begin{center}
    \begin{tikzpicture} %
      \node (X) at (0,0) {X};
      \node (Y) at (4,0) {Y};
      \node (Z) at (2,1) {Z};
      \draw [->] (Z) edge (X);
      \draw [->] (Z) edge (Y);
      \draw [->] (X) edge (Y);
    \end{tikzpicture}
  \end{center}
  \caption{The basic causal graph of this paper}
  \label{fig:back}
\end{figure}

Our running example will be the causal diagram in Figure~\ref{fig:back},
which we now use to explain our notation
(in which we follow mainly Pearl \cite{Pearl:2009}).
The variables, such as $X$, $Y$, and $Z$,
in our causal diagrams will always range over finite sets
denoted by the corresponding boldface letters,
such as $\mathbf{X}$, $\mathbf{Y}$, and $\mathbf{Z}$,
and called their \emph{domains}
(equipped with the discrete $\sigma$-algebras).
However, when talking specifically about the example in Figure~\ref{fig:back},
we will usually assume that $\mathbf{X}=\mathbf{Y}=\mathbf{Z}=\{0,1\}$,
so that $X,Y,Z$ are the indicator functions of events.

Let $P$ be a positive probability measure on $\mathbf{X}\times\mathbf{Y}\times\mathbf{Z}$
(generating the random variables $X$, $Y$, and $Z$).
Suppose it factorizes according to Figure~\ref{fig:back}:
\begin{multline}\label{eq:factor}
  P(X=x,Y=y,Z=z)\\
  =
  P(Z=z)
  P(X=x\mid Z=z)
  P(Y=y\mid X=x,Z=z)
\end{multline}
for all $x\in\mathbf{X}$, $y\in\mathbf{Y}$, and $z\in\mathbf{Z}$.
It will be very convenient to use Pearl's
\cite[Sects.~1.1.4 and~1.1.5]{Pearl:2009}
convention and abbreviate
$P(X=x)$ to $P(x)$, $P(Y=y)$ to $P(y)$, etc.;
such abbreviated notation will also be used
when we have, say, $\tilde x$ or $x'$ in place of $x$.
We will also often omit mentioning that $x\in\mathbf{X}$, $y\in\mathbf{Y}$, etc.
With this convention, we can rewrite \eqref{eq:factor} as
\[
  P(x,y,z)
  =
  P(z)
  P(x\mid z)
  P(y\mid x,z).
\]

Let $\tilde x\in\mathbf{X}$.
We use Pearl's \cite{Pearl:2009} notation $\Do(X=\tilde x)$,
usually abbreviated to $\Do(\tilde x)$,
to signify setting $X$ to $\tilde x$
(we will define what this means formally
only in specific contexts).
Let us define,
in the context of Figure~\ref{fig:back},
the \emph{causal effect} of $X$ on $Y$ as
\begin{equation}\label{eq:back}
  P(y\mid\Do(\tilde x))
  :=
  \sum_{z}
  P(y\mid\tilde x,z)
  P(z).
\end{equation}
(The definition in \cite[Sect.~3.2]{Pearl:2009} is more general,
but it is not our focus in this paper
and we will use simpler \emph{ad hoc} definitions.)
The interpretation of \eqref{eq:back}
(and of causal effects in general)
is that it is the probability of $Y=y$
in the mutilated causal model
in which the arrow from $Z$ to $X$ in Figure~\ref{fig:back} has been removed
and $X$ has been set to $\tilde x$.

The decomposition \eqref{eq:factor} and these notational conventions
generalize to any directed acyclic graph (dag),
and Figure~\ref{fig:back} can be generalized
to the following \emph{back-door criterion},
which is stated in terms of ``blocking'',
as defined in \cite[Definition 1.2.3]{Pearl:2009}.
If $X$, $Y$, and $Z$ are disjoint non-empty sets of variables in a dag,
$Z$ is said to satisfy the back-door criterion relative to $(X,Y)$
if, for any $X'\in X$ and $Y'\in Y$,
\begin{itemize}
\item
  no vertex in $Z$ is a descendant of $X'$,
  and 
\item
  $Z$ blocks every \emph{back-door path} from $X'$ to $Y'$,
  i.e., every path between $X'$ and $Y'$ that contains an arrow into $X'$
\end{itemize}
\cite[Definition 3.3.1]{Pearl:2009}.
If the back-door criterion is satisfied,
the \emph{causal effect} can still be defined as \eqref{eq:back}
\cite[Theorem 3.3.2]{Pearl:2009}.
However, now the summing $\sum_z$ over $z$ in \eqref{eq:back} means
summing over all possible values of the variables in $Z$:
\[
  \sum_z
  :=
  \sum_{z_1\in\mathbf{Z}^1}
  \dots
  \sum_{z_k\in\mathbf{Z}^k},
\]
where $\mathbf{Z}^i$ is the domain of $Z^i$, $i=1,\dots,k$,
and $Z^1,\dots,Z^k$ are the elements of $Z$, $Z=\{Z^1,\dots,Z^k\}$.
Moreover, $\tilde x$ should specify the values for all variables in $X$.
In the theorems below we will use the notation
\begin{equation}\label{eq:Z}
  \left|\mathbf{Z}\right|
  :=
  \left|\mathbf{Z}^1\right|
  \dots
  \left|\mathbf{Z}^k\right|.
\end{equation}

Another set of conditions that allows a natural
(albeit not obvious)
definition of causal effects
is known as the \emph{front-door criterion}.
Now let $X$ and $Y$ be variables and $Z$ be a non-empty set of variables
not containing $X$ and $Y$.
Then we say that $Z$ satisfies the front-door criterion relative to $(X,Y)$
if
\begin{itemize}
\item
  $Z$ intercepts all directed paths from $X$ to $Y$,
  and
\item
  there is no unblocked back-door path from $X$ to $Z$,
  and
\item
  all back-door paths from $Z$ to $Y$ are blocked by $X$
\end{itemize}
\cite[Definition 3.3.3]{Pearl:2009}.
In this case the \emph{causal effect} is defined by
\begin{equation}\label{eq:front}
  P(y\mid\Do(\tilde x))
  :=
  \sum_z
  P(z\mid\tilde x)
  \sum_x
  P(y\mid x,z)
  P(x)
\end{equation}
\cite[Theorem 3.3.4]{Pearl:2009}.

It is very important (but does not concern us in this paper)
that some of the variables in the causal dag may be unobservable;
it's fine as long as these variables do not enter expressions for causal effects,
such as \eqref{eq:back} or~\eqref{eq:front}.

\section{The IID setting}
\label{sec:IID}

We start from the most standard setting
where the observations before intervention are IID;
see, e.g., \cite[the beginning of Sect.~2.2]{Pearl:2009}.
Informally, Nature possesses stable causal mechanisms
that are organized in the form of a graphical structure.
In causal calculus,
the structure is a known dag,
and the stable causal mechanisms are unknown probability distributions
of the variables in the vertices of the dag given their parents.
The available observations are generated in the IID fashion.
(The IID nature of the observations usually stays implicit,
and it appears that in Pearl's book \cite{Pearl:2009}
it is made explicitly only in \cite[the end of Sect.~3.6.1]{Pearl:2009}.)

\begin{figure}
  \centerline{%
    \begin{tikzpicture} %
      \node (X1) at (0,0) {$X_1$};
      \node (Y1) at (2,0) {$Y_1$};
      \node (Z1) at (1,1) {$Z_1$};
      \draw [->] (Z1) edge (X1);
      \draw [->] (X1) edge (Y1);
      \draw [->] (Z1) edge (Y1);
      \node (X2) at (4,0) {$X_2$};
      \node (Y2) at (6,0) {$Y_2$};
      \node (Z2) at (5,1) {$Z_2$};
      \draw [->] (Z2) edge (X2);
      \draw [->] (X2) edge (Y2);
      \draw [->] (Z2) edge (Y2);
      \node (XN) at (10.3,0) {$X_N$};
      \node (YN) at (12.3,0) {$Y_N$};
      \node (ZN) at (11.3,1) {$Z_N$};
      \draw [->] (ZN) edge (XN);
      \draw [->] (XN) edge (YN);
      \draw [->] (ZN) edge (YN);
      \node[draw, rectangle] (past1) at (2.5,0.6) {$\text{past}_1$};
      \draw [->] (past1) edge (X2);
      \node[draw, rectangle] (past2) at (6.5,0.6) {$\text{past}_2$};
      \node at (7.4,0.5) {$\dots$}; %
      \node[draw, rectangle] (past3) at (8.5,0.6) {$\text{past}_{N-1}$};
      \draw [->] (past3) edge (XN);
    \end{tikzpicture}}
  \caption{The repeated causal graph}
  \label{fig:back-repeated}
\end{figure}

Figure~\ref{fig:back-repeated} shows $N$ repetitions
of the causal system represented in Figure~\ref{fig:back}.
Let us ignore the cells labelled $\text{past}_i$, $i\in\{1,\dots,N-1\}$,
for now (formally, we are assuming that these variables take a fixed known value).
This composite causal diagram then represents $N$ IID observations
over the base diagram of Figure~\ref{fig:back}.
In this section we are interested in Figure~\ref{fig:back-repeated}
with an arbitrary dag as the base diagram,
not necessarily the one in Figure~\ref{fig:back}.
This IID picture will give the narrowest confidence intervals
out of those derived in this paper.

The following theorem treats the case of the back-door criterion,
and it is proved (as all other results in Sects.~\ref{sec:IID}--\ref{sec:anytime})
in Appendix~\ref{app:proofs-1}.
A confidence interval
$[m-h,m+h]$ will be represented
in terms of its \emph{midpoint} $m$ and \emph{half-width} $h$.
The number $N$ of repetitions is fixed, and we set
\begin{equation}\label{eq:number}
  \# x y := \left|\{n\in[N]:(X_n,Y_n)=(x,y)\}\right|,
\end{equation}
where $[N]:=\{1,\dots,N\}$;
i.e., $\# x y$ is the number of times $(X_n,Y_n)=(x,y)$).
The analogous notation will be used for sequences other than $x y$,
such as $x y z$ and $z$.
See \eqref{eq:Z} for the definition of $\left|\mathbf{Z}\right|$.

\begin{theorem}\label{thm:IID-back}
  Suppose the back-door criterion is satisfied.
  Let $\delta>0$, $\tilde x\in\mathbf{X}$, and $y\in\mathbf{Y}$.
  The following is a $(1-\delta)$-confidence interval
  for the parameter $P(y\mid\Do(\tilde x))$ defined by \eqref{eq:back}:
  the midpoint is
  \begin{equation}\label{eq:IID-back-1}
    \sum_z
    \hat p(y\mid\tilde x,z)
    \hat p(z)
  \end{equation}
  and the half-width is
  \begin{equation}\label{eq:IID-back-2}
    \left|\mathbf{Z}\right|
    \sqrt{\frac{\ln\frac{4\left|\mathbf{Z}\right|}{\delta}}{2N}}
    +
    \sum_z
    \sqrt{\frac{\ln\frac{4\left|\mathbf{Z}\right|}{\delta}}{2\#\tilde x z}},
  \end{equation}
  where $\hat p(y\mid\tilde x,z):=\#\tilde x y z / \#\tilde x z$
  is the standard estimate for $P(y\mid\tilde x,z)$
  and $\hat p(z):=\#z / N$ is the standard estimate for $P(z)$.
  (The half-width~\eqref{eq:IID-back-2} is understood to be $\infty$
  when $\#\tilde x z=0$ for some $z$.)
\end{theorem}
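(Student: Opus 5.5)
We decompose the error of the plug-in estimate \eqref{eq:IID-back-1} and use a union bound over $2\left|\mathbf{Z}\right|$ events, each controlled by Hoeffding's inequality at level $\delta/(2\left|\mathbf{Z}\right|)$. Since the true quantity is $\sum_z P(y\mid\tilde x,z)P(z)$, we write
\begin{multline*}
  \sum_z \hat p(y\mid\tilde x,z)\hat p(z) - \sum_z P(y\mid\tilde x,z)P(z)\\
  = \sum_z \hat p(y\mid\tilde x,z)\bigl(\hat p(z)-P(z)\bigr)
  + \sum_z \bigl(\hat p(y\mid\tilde x,z)-P(y\mid\tilde x,z)\bigr)P(z).
\end{multline*}
Because $0\le\hat p(y\mid\tilde x,z)\le1$ and $0\le P(z)\le1$, the absolute value is bounded by
\[
\sum_z\left|\hat p(z)-P(z)\right|+\sum_z\left|\hat p(y\mid\tilde x,z)-P(y\mid\tilde x,z)\right|.
\]
It therefore suffices to show that, with probability at least $1-\delta$, simultaneously for all $z$ we have $\left|\hat p(z)-P(z)\right|\le\sqrt{\ln(4\left|\mathbf{Z}\right|/\delta)/(2N)}$ and $\left|\hat p(y\mid\tilde x,z)-P(y\mid\tilde x,z)\right|\le\sqrt{\ln(4\left|\mathbf{Z}\right|/\delta)/(2\#\tilde x z)}$. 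Summing the first bound over the $\left|\mathbf{Z}\right|$ values of $z$ gives the factor $\left|\mathbf{Z}\right|$ in \eqref{eq:IID-back-2}.

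For the first family, $\#z$ is a sum of $N$ IID Bernoulli$(P(z))$ indicators. The two-sided Hoeffding inequality gives $\P(|\hat p(z)-P(z)|>\epsilon)\le 2e^{-2N\epsilon^2}$. Setting this equal to $\delta/(2\left|\mathbf{Z}\right|)$ yields exactly $\epsilon=\sqrt{\ln(4\left|\mathbf{Z}\right|/\delta)/(2N)}$.

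The second family is the main obstacle, because the sample size $\#\tilde x z$ is random. We handle it by conditioning on the sequence $(X_n,Z_n)_{n\in[N]}$. In the IID setting, conditionally on this sequence the $Y_n$ are independent, and for $n$ with $(X_n,Z_n)=(\tilde x,z)$ we have $P(Y_n=y\mid\cdot)=P(y\mid\tilde x,z)$. So, given the conditioning, $\#\tilde x y z$ is Binomial$(\#\tilde x z, P(y\mid\tilde x,z))$ with a fixed number of trials. Applying Hoeffding conditionally with failure probability $\delta/(2\left|\mathbf{Z}\right|)$ gives the stated half-width whenever $\#\tilde x z>0$. When $\#\tilde x z=0$ the bound is $\infty$ and holds trivially. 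Integrating over the conditioning variables shows the conditional bound also holds unconditionally.

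Note that the back-door criterion is used only to identify the target parameter as \eqref{eq:back}, which we take from \cite[Theorem 3.3.2]{Pearl:2009}. Finally, a union bound over the $2\left|\mathbf{Z}\right|$ events bounds the total failure probability by $\delta$, which completes the proof.
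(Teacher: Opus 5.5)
Your proof is correct and follows essentially the paper's route: Hoeffding's inequality for each of the $2\left|\mathbf{Z}\right|$ constituent probabilities at level $\delta/(2\left|\mathbf{Z}\right|)$, a union bound, and propagation of the errors through the sum of products. The differences are minor. Your identity $\hat a\hat b-ab=\hat a(\hat b-b)+(\hat a-a)b$ does the job of the paper's interval-arithmetic lemma and Corollary~\ref{cor:combination}, and your conditioning on $(X_n,Z_n)_{n\in[N]}$ spells out how the random sample size $\#\tilde x z$ is handled, which the paper leaves implicit.
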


In the case of Figure~\ref{fig:back} with binary variables,
we can replace \eqref{eq:IID-back-2} by
\begin{equation}\label{eq:IID-back-2-toy}
  2\sqrt{\frac{\ln\frac{6}{\delta}}{2N}}
  +
  \sum_{z\in\{0,1\}}
  \sqrt{\frac{\ln\frac{6}{\delta}}{2\#\tilde x z}}
\end{equation}
(although this does not quite follow from \eqref{eq:IID-back-2}).

The following is the analogue of Theorem~\ref{thm:IID-back}
for the front-door criterion.

\begin{theorem}\label{thm:IID-front}
  Suppose the front-door criterion is satisfied.
  Fix $\delta>0$, $\tilde x$, and $y$.
  The following is a $(1-\delta)$-confidence interval
  for the parameter $P(y\mid\Do(\tilde x))$ defined by \eqref{eq:front}:
  the midpoint is
  \begin{equation}\label{eq:IID-front-1}
    \sum_z
    \hat p(z\mid\tilde x)
    \sum_x
    \hat p(y\mid x,z)
    \hat p(x)
  \end{equation}
  and the half-width is
  \begin{equation}\label{eq:IID-front-2}
    \left|\mathbf{X}\right|\left|\mathbf{Z}\right|
    \sqrt{\frac{\ln\frac{2K}{\delta}}{2N}}
    +
    \left|\mathbf{Z}\right|
    \sqrt{\frac{\ln\frac{2K}{\delta}}{2\#\tilde x}}
    +
    \sum_{x,z}
    \sqrt{\frac{\ln\frac{2K}{\delta}}{2\#x z}},
  \end{equation}
  where
  \begin{equation*} %
    K
    :=
    \left|\mathbf{X}\right|\left|\mathbf{Z}\right|+\left|\mathbf{X}\right|+\left|\mathbf{Z}\right|
    =
    (\left|\mathbf{X}\right|+1)(\left|\mathbf{Z}\right|+1) - 1,
  \end{equation*}
  $\hat p(z\mid\tilde x):=\#\tilde x z / \#\tilde x$
  is the standard estimate for $P(z\mid\tilde x)$,
  $\hat p(y\mid x,z):=\#x y z / \#x z$
  is the standard estimate for $P(y\mid x,z)$,
  and $\hat p(x):=\#x / N$ is the standard estimate for $P(x)$.
\end{theorem}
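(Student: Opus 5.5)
The plan mirrors the proof of Theorem~\ref{thm:IID-back}: a union bound over Hoeffding inequalities, one for each of the $K$ estimated quantities, followed by a telescoping (perturbation) bound on the product-sum \eqref{eq:front}. The $K=|\mathbf{X}||\mathbf{Z}|+|\mathbf{X}|+|\mathbf{Z}|$ events are the following.

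First, for each $x$, $|\hat p(x)-P(x)|\le\epsilon_N$, where $\epsilon_m:=\sqrt{\ln(2K/\delta)/(2m)}$; this is two-sided Hoeffding for the $N$ IID Bernoulli indicators $1\{X_n=x\}$.

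Second, for each $z$, $|\hat p(z\mid\tilde x)-P(z\mid\tilde x)|\le\epsilon_{\#\tilde x}$.

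Third, for each $(x,z)$, $|\hat p(y\mid x,z)-P(y\mid x,z)|\le\epsilon_{\#xz}$.

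For the conditional estimates we use the standard conditioning argument. Given the sequence $X_1,\dots,X_N$, the $Z_n$ with $X_n=\tilde x$ are IID with law $P(\cdot\mid\tilde x)$. Likewise, given $(X_n,Z_n)_{n\in[N]}$, the $Y_n$ with $(X_n,Z_n)=(x,z)$ are IID with law $P(\cdot\mid x,z)$; this uses only that the observations are IID from $P$. So Hoeffding applies conditionally with random but conditioned-on sample size, and each failure probability is at most $\delta/K$ unconditionally. If a count is $0$ the half-width is $\infty$ and there is nothing to prove. By the union bound, all $K$ events hold simultaneously with probability at least $1-\delta$.

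On this event we bound the error of \eqref{eq:IID-front-1}. Write $a_z=P(z\mid\tilde x)$, $b_{xz}=P(y\mid x,z)$, $c_x=P(x)$, and use hats for the estimates. Decompose
\[
  \hat a_z\hat b_{xz}\hat c_x-a_zb_{xz}c_x
  =
  (\hat a_z-a_z)\hat b_{xz}\hat c_x
  +a_z(\hat b_{xz}-b_{xz})\hat c_x
  +a_zb_{xz}(\hat c_x-c_x).
\]
Sum over $x,z$ and use that all factors lie in $[0,1]$. The first term contributes at most $\sum_z|\hat a_z-a_z|\sum_x\hat c_x=\sum_z|\hat a_z-a_z|\le|\mathbf{Z}|\epsilon_{\#\tilde x}$, since $\sum_x\hat c_x=1$. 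The second contributes at most $\sum_{x,z}\epsilon_{\#xz}$. The third contributes at most $\sum_{x,z}\epsilon_N=|\mathbf{X}||\mathbf{Z}|\epsilon_N$. Together these give exactly \eqref{eq:IID-front-2}.

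The only delicate step is justifying conditional Hoeffding with random counts $\#\tilde x$ and $\#xz$. The structure needed is just the factorization of the IID joint law, so the front-door graph conditions play no role beyond making \eqref{eq:front} the target. I would handle this exactly as in the proof of Theorem~\ref{thm:IID-back}: condition on the relevant coordinates, apply Hoeffding for the fixed realized count, and integrate out.
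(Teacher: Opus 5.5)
Your proof is correct. The probabilistic half matches the paper: one Hoeffding event at level $\delta/K$ for each of the $K$ estimated probabilities, then a union bound. Your explicit conditioning on the $X_n$ (resp.\ on the $(X_n,Z_n)$) to handle the random sample sizes $\#\tilde x$ and $\#xz$ is more careful than the paper, which just cites Proposition~\ref{prop:Okamoto}.

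Where you differ is the aggregation step. The paper proves a generic interval-arithmetic lemma, $(a\pm\Delta a)\times(b\pm\Delta b)\subseteq ab\pm(\Delta a+\Delta b)$ plus its additive analogue. It lifts this by induction on polynomial trees to Corollary~\ref{cor:combination} and applies it to the nested form $\sum_z P(z\mid\tilde x)\sum_x P(y\mid x,z)P(x)$. The half-width is then simply the sum of the individual half-widths over all leaf occurrences. That is a reusable black box (it serves Theorems~\ref{thm:IID-back}--\ref{thm:anytime-back} verbatim) and needs only that every factor lies in $[0,1]$. Its output, however, depends on the bracketing: as Remark~\ref{rem:Horner} notes, the expanded form would give $|\mathbf{X}||\mathbf{Z}|\epsilon_{\#\tilde x}$ instead of $|\mathbf{Z}|\epsilon_{\#\tilde x}$ (in your notation $\epsilon_m$).

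Your three-term telescoping starts from the expanded form yet recovers the nested-form bound, because it uses the normalization $\sum_x\hat p(x)=1$, which interval arithmetic cannot exploit. The same idea sharpens your third term. Since $\sum_z P(z\mid\tilde x)P(y\mid x,z)\le 1$, that term is at most $|\mathbf{X}|\epsilon_N$ rather than $|\mathbf{X}||\mathbf{Z}|\epsilon_N$. The resulting half-width $|\mathbf{X}|\epsilon_N+|\mathbf{Z}|\epsilon_{\#\tilde x}+\sum_{x,z}\epsilon_{\#xz}$ is at most the smaller of \eqref{eq:version-1} and \eqref{eq:version-2}, whereas the paper has to choose between those two. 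The price is that your argument is tailored to this particular expression and its normalization constraints, rather than being a general recipe for arbitrary polynomial causal-effect formulas.
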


For the same size $\left|\mathbf{Z}\right|$,
the accuracy \eqref{eq:IID-front-2} that we have for the front-door criterion
appears much worse than the accuracy \eqref{eq:IID-back-2} for the back-door one.

\section{The adaptive setting with a fixed horizon}
\label{sec:core}

Under the \emph{strong interpretation} of Figure \ref{fig:back-repeated},
considered in this section,
each box $\text{past}_n$ stands for the whole past,
including the variables $X_i$, $Y_i$, and $Z_i$, $i\in[n]$.
Now each $X_{n+1}$, $n\in[N-1]$,
has incoming arrows (not shown explicitly in the figure)
from all variables at the previous steps,
including $X_i$, $Y_i$, and $Z_i$, $i\in[n]$.
As before, we allow repetition of any dag in Figure \ref{fig:back-repeated},
not just the one in Figure~\ref{fig:back}.

Our interpretation of Figure \ref{fig:back-repeated}
is that $X$ is a decision that has $Y$ as its result.
The decision at step $n+1$ may depend on the past decisions
and past values of $Y$, $Z$, and other variables.
In other words, the decision maker has access to all past observations.
In the case of Figure~\ref{fig:back},
all $Z_n$ are independent of the past and identically distributed;
all $Y_n$ have the same distribution given $X_n$ and $Z_n$,
and they are conditionally independent of the past.

For an integer $n\ge2$,
let $\llfloor n\rrfloor$ be the largest integer of the form $2^k$,
$k\in\{1,2,\dots\}$, satisfying $2^k\le n$
(and for $n<2$, $\llfloor n\rrfloor$ is defined as, say, 1).
We let $\lb$ stand for binary logarithm $\log_2$,
and we will often use it in the context of
$\lb\llfloor n\rrfloor=\lfloor\lb n\rfloor$
for a positive integer $n$.

It will be useful to extend the notation~\eqref{eq:number} and set, e.g.,
\begin{equation*}
  \#_m x y := \left|\{n\in[m]:(X_n,Y_n)=(x,y)\}\right|,
\end{equation*}
where $m$ may be different from $N$.
Earlier we defined standard estimates such as
$\hat p(y\mid\tilde x,z):=\#\tilde x y z / \#\tilde x z$
(and we will refrain from defining $\hat p$ in other similar contexts
in the following theorems).
We will also need the modification of $\hat p(y\mid\tilde x,z)$
defined by
\begin{equation}\label{eq:estimate}
  \dhat p(y\mid\tilde x,z)
  :=
  \frac
    {
      \left|\left\{
        n\in[N]:
        \#_n\tilde x z \le \llfloor\#\tilde x z\rrfloor,
        X_n=\tilde x, Y_n=y, Z_n=z
      \right\}\right|
    }
    {\llfloor\#\tilde x z\rrfloor}.
\end{equation}
In words, $\dhat p(y\mid\tilde x,z)$ is the fraction
of the first $\llfloor\#\tilde x z\rrfloor$ observations
with $X_n=\tilde x$ and $Z_n=z$ for which $Y_n=y$.
It is also an estimate of $P(y\mid\tilde x,z)$,
but it might not use all the available data
(however, it uses at least one half of the relevant observations).
We will also use the notation $\dhat p$ in other contexts,
such as $\dhat p(z\mid\tilde x)$.

Now we have to replace the Hoeffding inequality
used in our proof of Theorem~\ref{thm:IID-back} %
in Appendix~\ref{app:proofs-1} by the law of the iterated logarithm,
and so we will get our first iterated logarithm term.

\begin{theorem}\label{thm:core-back}
  Suppose the back-door criterion is satisfied.
  Fix $\delta>0$, $\tilde x$, and $y$;
  the time horizon $N$ is also fixed.
  Under the strong interpretation,  
  the following is a $(1-\delta)$-confidence interval
  for the parameter $P(y\mid\Do(\tilde x))$ defined by \eqref{eq:back}:
  the midpoint is
  \begin{equation}\label{eq:core-back-1}
    \sum_z
    \hat p(z)
    \dhat p(y\mid\tilde x,z)
  \end{equation}
  and the half-width is
  \begin{equation}\label{eq:core-back-2}
    \left|\mathbf{Z}\right|
    \sqrt{\frac{\ln\frac{4\left|\mathbf{Z}\right|}{\delta}}{2N}}
    +
    \sum_z
    \sqrt{
      \frac
        {2\ln\lb\llfloor\#\tilde x z\rrfloor+\ln\frac{6.6\left|\mathbf{Z}\right|}{\delta}}
        {2\llfloor\#\tilde x z\rrfloor}
    }.
  \end{equation}
\end{theorem}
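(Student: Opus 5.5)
The plan is to bound the error of the midpoint by splitting it as
\[
  \sum_z \hat p(z)\dhat p(y\mid\tilde x,z) - \sum_z P(z)P(y\mid\tilde x,z)
  =
  \sum_z \bigl(\hat p(z)-P(z)\bigr)\dhat p(y\mid\tilde x,z)
  +
  \sum_z P(z)\bigl(\dhat p(y\mid\tilde x,z)-P(y\mid\tilde x,z)\bigr),
\]
and then to allocate the error budget $\delta$ as $\delta/2$ to the first sum and $\delta/2$ to the second. Since $\dhat p\in[0,1]$ and $P(z)\le1$, the absolute value of the first sum is at most $\sum_z|\hat p(z)-P(z)|$, and that of the second is at most $\sum_z|\dhat p(y\mid\tilde x,z)-P(y\mid\tilde x,z)|$. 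By a union bound it therefore suffices to bound each of the $2|\mathbf{Z}|$ deviations, with failure probability $\delta/(4|\mathbf{Z}|)$ for each deviation in the first group and a suitable share of $\delta/2$ for each in the second.

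For the first group, under the strong interpretation with the back-door criterion, the variables $Z_n$ are IID with law $P$, because $Z$ contains no descendants of $X$ and the past feeds only into $X_n$. The indicators of $Z_n=z$ are therefore IID Bernoulli$(P(z))$, and the two-sided Hoeffding inequality with failure probability $\delta/(4|\mathbf{Z}|)$ gives $|\hat p(z)-P(z)|\le\sqrt{\ln(4|\mathbf{Z}|/\delta)/(2N)}$ simultaneously for all $z$ with probability at least $1-\delta/4$. Summing over $z$ gives the first term of \eqref{eq:core-back-2}. (The bookkeeping $4|\mathbf{Z}|$ versus $2\cdot 2|\mathbf{Z}|$ is routine and I will match the constants at the end.)

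For the second group, fix $z$ and consider the subsequence of trials with $X_n=\tilde x$ and $Z_n=z$. Let $Y^{(j)}$ be the $Y$-value at the $j$th such trial. The key claim is that $\mathbf 1\{Y^{(1)}=y\},\mathbf 1\{Y^{(2)}=y\},\dots$ are IID Bernoulli$(P(y\mid\tilde x,z))$, as long as they are defined. This holds even though the times of selection are chosen adaptively, because the selection event depends only on the past and on $(X_n,Z_n)$, and given these $Y_n$ has conditional law $P(\cdot\mid\tilde x,z)$. This is the standard optional-skipping (Doob) argument. To handle trials beyond the available data, I extend the sequence with fictitious independent draws, so that $S_k:=\sum_{j\le k}(\mathbf 1\{Y^{(j)}=y\}-P(y\mid\tilde x,z))$ is a martingale with bounded increments. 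Then $\dhat p(y\mid\tilde x,z)-P(y\mid\tilde x,z)=S_{2^K}/2^K$, where $2^K=\llfloor\#\tilde x z\rrfloor$ is random.

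The main obstacle is that $K$ is random, which is handled by a union bound over the dyadic times $k=2^K$, $K=1,2,\dots$. For each $K$, Hoeffding gives $P(|S_{2^K}|/2^K>\epsilon_K)\le 2\exp(-2\cdot2^K\epsilon_K^2)$. I choose $\epsilon_K$ so that the failure probabilities are $\delta_K:=\frac{\delta}{c|\mathbf{Z}|K^2}$ and hence summable: $\sum_K K^{-2}=\pi^2/6$, so the total over $K$ and $z$ is at most $\delta\pi^2/(6c)$ times a factor from two-sidedness. Setting $2\exp(-2\cdot2^K\epsilon_K^2)=\delta_K$ gives
\[
  \epsilon_K=\sqrt{\frac{2\ln K+\ln\frac{2c|\mathbf{Z}|}{\delta}}{2\cdot2^K}},
\]
with $K=\lb\llfloor\#\tilde x z\rrfloor$. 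With a total budget of $\delta/2$ for this group, I need $2c|\mathbf{Z}|\cdot\frac{\pi^2}{6}\cdot\frac{1}{c}\le$ the right constant, i.e.\ $\sum_K 2\delta_K|\mathbf{Z}|\le\delta/2$. This requires $2c\ge 4\cdot\pi^2/6\approx6.58$, so the constant $6.6$ works and yields exactly the second term of \eqref{eq:core-back-2}. The point where the paper's $4|\mathbf{Z}|$ and $6.6|\mathbf{Z}|$ split might differ from my $\delta/2$–$\delta/2$ split is only in constant allocation: if $6.6$ corresponds to the full $\delta$ going to this group in some sense, I would adjust the shares, for instance $\delta/2$ to each group with the Hoeffding term using $2|\mathbf{Z}|\cdot2$. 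Finally, the event $\#\tilde x z<2$, where $\llfloor\cdot\rrfloor=1$, gives an infinite or trivial bound and is harmless.
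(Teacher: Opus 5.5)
Your proposal is correct and follows the paper's proof. The paper uses Hoeffding for each $P(z)$, Proposition~\ref{prop:LIL}'s dyadic union bound on the subsequence of trials with $X_n=\tilde x$, $Z_n=z$, and $\delta/(2\left|\mathbf{Z}\right|)$ for each of the $2\left|\mathbf{Z}\right|$ parameters, which is exactly your $\delta/2$--$\delta/2$ split, so no reallocation is needed; these are combined by $|ab-a'b'|\le|a-a'|+|b-b'|$, which is what Corollary~\ref{cor:combination} reduces to here. Your optional-skipping/padding remark makes explicit what the paper leaves implicit, whereas the paper justifies, via $d$-separation and the blocking clause of the back-door criterion, the conditional-law claim for $Y_n$ that you merely assert.

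Only the arithmetic needs tidying. A per-$z$ two-sided failure of $\delta/(2\left|\mathbf{Z}\right|)$ (total $\delta/2$, not $\delta/4$) is what gives $\ln\frac{4\left|\mathbf{Z}\right|}{\delta}$. Since $2\exp(-2\cdot2^K\epsilon_K^2)=\delta_K$ already covers both tails, the condition is $\pi^2/(6c)\le1/2$, i.e.\ $c\ge\pi^2/3$, so $2c=6.6$ works; your inequality $\sum_K2\delta_K\left|\mathbf{Z}\right|\le\delta/2$ double-counts the factor~$2$ and would force $c\ge2\pi^2/3$.
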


\noindent
The case $\#\tilde x z\in\{0,1\}$ in \eqref{eq:core-back-2} requires special treatment;
namely, we set $\ln\lb1:=\infty$,
and so \eqref{eq:core-back-2} is interpreted as $\infty$ unless $\#\tilde x z\ge2$ for all $z$.

For the binary case of Figure~\ref{fig:back}, we can replace~\eqref{eq:core-back-2} by
\begin{equation}\label{eq:core-back-2-toy}
  2\sqrt{\frac{\ln\frac{6}{\delta}}{2N}}
  +
  \sum_{z\in\{0,1\}}
  \sqrt{\frac{2\ln\lb\llfloor\#\tilde x z\rrfloor+\ln\frac{10}{\delta}}{2\llfloor\#\tilde x z\rrfloor}},
\end{equation}
similarly to \eqref{eq:IID-back-2-toy}.

It is not clear how to extend Theorem~\ref{thm:core-back}
to the front-door criterion.
Moreover, it is not even obvious
that the quantity~\eqref{eq:front} is well-defined
under the strong interpretation,
since, e.g., the distribution of $X$ changes from step to step.
(For a demonstration that it is indeed well-defined
see, e.g., \cite[(3.27)]{Pearl:2009}.)

\section{The anytime-valid adaptive setting}
\label{sec:anytime}

Theorem~\ref{thm:core-back} can be easily extended to the setting
in which the time horizon $N$ is not fixed in advance.
Now we would like our results to be anytime valid,
with $N$ ranging over the positive integers $\{1,2,\dots\}$.
Since $N$ is variable,
now we will write $\dhat p_N(y\mid\tilde x,z)$
in place of $\dhat p(y\mid\tilde x,z)$ defined by \eqref{eq:estimate}
and add the lower index $N$ in other similar places;
in particular,
$\dhat p_N(z):=\#_{\llfloor N\rrfloor}z/\llfloor N\rrfloor$.
Remember that a \emph{$(1-\delta)$-confidence sequence}
is a sequence of (confidence) intervals
whose intersection covers the true parameter value
with probability at least $1-\delta$.

\begin{theorem}\label{thm:anytime-back}
  Suppose the back-door criterion is satisfied.
  Let $\delta>0$, $\tilde x\in\mathbf{X}$, and $y\in\mathbf{Y}$.
  The following is a $(1-\delta)$-confidence sequence
  for the parameter $P(y\mid\Do(\tilde x))$ defined by \eqref{eq:back}:
  the midpoint is
  \[
    \sum_z
    \dhat p_N(z)
    \dhat p_N(y\mid\tilde x,z)
  \]
  and the half-width is
  \begin{equation}\label{eq:anytime-back-2}
    \left|\mathbf{Z}\right|
    \sqrt{
      \frac
        {2\ln\lb\llfloor N\rrfloor+\ln\frac{6.6\left|\mathbf{Z}\right|}{\delta}}
        {2\llfloor N\rrfloor}
    }
    +
    \sum_z
    \sqrt{
      \frac
        {2\ln\lb\llfloor\#_N\tilde x z\rrfloor+\ln\frac{6.6\left|\mathbf{Z}\right|}{\delta}}
        {2\llfloor\#_N\tilde x z\rrfloor}
    }.
  \end{equation}
\end{theorem}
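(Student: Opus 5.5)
The plan is to reduce the statement to the same decomposition as in the proof of Theorem~\ref{thm:core-back}. The only change is that the estimate of $P(z)$ now also needs a time-uniform (dyadic, iterated-logarithm) bound. Write $p:=P(y\mid\Do(\tilde x))=\sum_z P(z)P(y\mid\tilde x,z)$. For each $N$,
\[
  \Bigl|\sum_z \dhat p_N(z)\dhat p_N(y\mid\tilde x,z) - p\Bigr|
  \le
  \sum_z \bigl|\dhat p_N(z)-P(z)\bigr|
  +
  \sum_z \bigl|\dhat p_N(y\mid\tilde x,z)-P(y\mid\tilde x,z)\bigr|,
\]
because all the probabilities and estimates lie in $[0,1]$. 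Concretely, write $\hat a\hat b-ab=(\hat a-a)\hat b+a(\hat b-b)$ with $\hat b\le1$ and $a\le1$. So it suffices to find $2\left|\mathbf{Z}\right|$ events, each of probability at least $1-\delta/(2\left|\mathbf{Z}\right|)$, on which, simultaneously for all $N$, each term is bounded by the corresponding summand of \eqref{eq:anytime-back-2}. A union bound then finishes the proof. Since the bounds hold for all $N$ at once, the intersection of the intervals covers $p$, which is exactly the confidence-sequence property.

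For the first group of terms, note that $\dhat p_N(z)$ depends on $N$ only through $k=\lb\llfloor N\rrfloor$: it is the empirical frequency of $z$ among the first $2^k$ observations. Under the strong interpretation the $Z_n$ are IID with distribution $P$; $Z_n$ has no incoming arrows from the past, by the back-door criterion, since no vertex in $Z$ is a descendant of $X$. So for each $k\ge1$, Hoeffding's inequality gives a two-sided deviation bound $\sqrt{\ln(2/\delta_k)/(2\cdot 2^k)}$ with probability at least $1-\delta_k$. I would choose $\delta_k:=\delta'/(c k^2)$ with $\delta'=\delta/(2\left|\mathbf{Z}\right|)$, and with $c$ such that $\sum_{k\ge1}1/(ck^2)\le1$, i.e., $c=\pi^2/6$. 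Then $\ln(2/\delta_k)=2\ln k+\ln(2c\cdot2\left|\mathbf{Z}\right|/\delta)$. The constant is $4c=4\pi^2/6\approx6.58\le6.6$, which is where $6.6$ comes from. I expect that this is literally the lemma used in the proof of Theorem~\ref{thm:core-back}, and I would invoke it verbatim.

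For the second group of terms, I reuse the argument from Theorem~\ref{thm:core-back}. Let $\tau_1<\tau_2<\cdots$ be the successive times $n$ with $(X_n,Z_n)=(\tilde x,z)$. These are stopping times, and by the back-door criterion (strong interpretation), $Y_{\tau_j}$ are conditionally IID Bernoulli$(P(y\mid\tilde x,z))$ given the past. Hence $\mathbf{1}\{Y_{\tau_j}=y\}-P(y\mid\tilde x,z)$ is a bounded martingale difference sequence, or equivalently, by optional skipping, an IID sequence. Then $\dhat p_N(y\mid\tilde x,z)$ is the average of the first $2^k$ of these, with $k=\lb\llfloor\#_N\tilde x z\rrfloor$. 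Azuma–Hoeffding at each dyadic $2^k$, with the same $\delta_k$ allocation, gives the stated term for all $k$ simultaneously, and hence for all $N$, since the bound only depends on $N$ through $k$. The case $\#_N\tilde x z<2$ is handled by the convention $\ln\lb1=\infty$.

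The main obstacle I expect is justifying rigorously that the subsampled sequence $Y_{\tau_j}$ behaves like an IID (or martingale-difference) sequence even though $X_n$ is chosen adaptively. On the event that $\tau_{2^k}$ is infinite, the term is vacuous, so I would define auxiliary variables to extend the sequence. This is the step already handled in the proof of Theorem~\ref{thm:core-back}, and it carries over unchanged because that argument never used the fixed horizon $N$. So the new content is only the time-uniform treatment of $\dhat p_N(z)$. The constants then come out matching \eqref{eq:anytime-back-2}.
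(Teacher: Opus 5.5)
Your proposal is correct and matches the paper's proof. The paper also reuses the argument for Theorem~\ref{thm:core-back}, and the only change is that the dyadic confidence sequence of Proposition~\ref{prop:LIL} replaces Proposition~\ref{prop:Okamoto} for $P(z)$, with $\delta/(2\left|\mathbf{Z}\right|)$ allotted to each constituent probability, giving the constant $2\pi^2/3<6.6$. Your identity $\hat a\hat b-ab=(\hat a-a)\hat b+a(\hat b-b)$ is a direct form of the paper's interval-arithmetic bound \eqref{eq:times} and Corollary~\ref{cor:combination}, and your optional-skipping treatment of $Y_{\tau_j}$ is somewhat more careful than the paper's $d$-separation remark.
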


In the binary case of Figure \ref{fig:back},
\eqref{eq:anytime-back-2} can be slightly strengthened to
\begin{equation*}
  2
  \sqrt{
    \frac
      {2\ln\lb\llfloor N\rrfloor+\ln\frac{10}{\delta}}
      {2\llfloor N\rrfloor}
  }
  +
  \sum_{z\in\{0,1\}}
  \sqrt{
    \frac
      {2\ln\lb\llfloor\#_N\tilde x z\rrfloor+\ln\frac{10}{\delta}}
      {2\llfloor\#_N\tilde x z\rrfloor}
  }.
\end{equation*}

Theorem~\ref{thm:core-back}--\ref{thm:anytime-back} %
may be considered to be finite-sample analogues of Theorem~1 in \cite{Vovk:1996Gam}.
Their characteristic feature is the presence of iterated logarithm terms,
which are unavoidable (see Sect.~\ref{sec:necessity} for details)
and are especially prominent in the anytime-valid adaptive setting.

\begin{figure}
  \begin{center}
    \begin{tikzpicture} %
      \node (W) at (0,0) {W};
      \node (Z) at (2,0) {Z};
      \node (X) at (4,0) {X};
      \node (Y) at (6,0) {Y};
      \node (V) at (2,1) {V};
      \node (U) at (2,2) {U};
      \draw [->] (W) edge (Z);
      \draw [->] (Z) edge (X);
      \draw [->] (X) edge (Y);
      \draw [->] (V) edge (W);
      \draw [->] (V) edge (X);
      \draw [->] (U) edge (W);
      \draw [->] (U) edge (Y);
    \end{tikzpicture}
  \end{center}
  \caption{The napkin graph}
  \label{fig:napkin}
\end{figure}

\begin{remark}
  In this paper we only discuss, outside of this remark,
  causal effects that are representable as arithmetic expressions
  involving only two arithmetic operations,
  plus and multiplication
  (minus could be added for free but is not useful).
  There are, however, situations in which the causal effect
  is given in a form involving division,
  such as the \emph{napkin graph}, shown in Figure~\ref{fig:napkin}.
  It has the following expression for the causal effect of $X$ on $Y$:
  \begin{equation}\label{eq:p-napkin}
    P(y\mid\Do(\tilde x))
    :=
    \frac
      {\sum_w P(y,\tilde x\mid z,w)P(w)}
      {\sum_w P(\tilde x\mid z,w)P(w)}
  \end{equation}
  (see \cite[Figure 2(c) and (1)]{Helske/etal:2021}).
  The expression~\eqref{eq:p-napkin} is a ratio,
  and our methods encounter even more serious difficulties
  than in the case of the front-door criterion.
  An interesting feature of the expression \eqref{eq:p-napkin}
  is that its right-hand side involves $z$ but does not really depend on it,
  as is clear from its left-hand side;
  this is an instance of so-called Verma constraints \cite{Bhattacharya/Nabi:2022}.
  The presence of $z$ on the right-hand side of \eqref{eq:p-napkin}
  is in a certain sense inevitable;
  formally, $Z$ is a ``trapdoor variable''
  as defined in \cite[Definition~3]{Helske/etal:2021}
  and explained in \cite[Sect.~2.3]{Helske/etal:2021}.
\end{remark}

\section{The necessity of the iterated logarithm term in the adaptive setting}
\label{sec:necessity}

The next theorem (proved in Appendix~\ref{app:proofs-2})
shows that an iterated logarithm term is unavoidable
already in the fixed-horizon setting of Theorem~\ref{thm:core-back}.
We assume, without loss of generality, $\left|\mathbf{Z}\right|=1$,
and so we ignore $\mathbf{Z}$.
Let us fix a confidence level $1-\delta$ (it can be arbitrarily close to 0),
$\tilde x\in\mathbf{X}$, and $y\in\mathbf{Y}$.
We are interested in estimating the causal effect
$P(y\mid\Do(\tilde x))=P(y\mid\tilde x)$.

For a positive integer $N$, we consider a confidence estimator $C_N$ for $P(y\mid\tilde x)$;
formally, $C_N$ maps a sequence $(x_1,y_1,\dots,x_N,y_N)\in(\mathbf{X}\times\mathbf{Y})^N$
to a (closed) subinterval $C_N(x_1,y_1,\dots,x_N,y_N)$ of $[0,1]$
and satisfies the following natural property of validity:
regardless of the underlying probability measure $P$,
\begin{equation}\label{eq:coverage}
  P(y\mid\tilde x)
  \in
  C_N(X_1,Y_1,\dots,X_N,Y_N)
\end{equation}
with probability at least $1-\delta$.
If $C$ is an interval, we let $\left|C\right|$ stand for its length.

\begin{theorem}\label{thm:necessity}
  Let $(C_N)_{N=1}^{\infty}$ be a family of confidence estimators
  and let $f:\{0,1,\dots\}\to(0,\infty)$ satisfy
  \[
    f(n)
    =
    o
    \left(
      \sqrt{\frac{\ln\ln n}{n}}
    \right),
    \qquad
    n\to\infty.
  \]
  If $N$ is sufficiently large,
  there exists $(x_1,y_1,\dots,x_N,y_N)\in(\mathbf{X}\times\mathbf{Y})^N$
  such that
  \begin{equation}\label{eq:goal}
    \left|
      C_N(x_1,y_1,\dots,x_N,y_N)
    \right|
    >
    f(k)
  \end{equation}
  where
  $
    k
    :=
    \sum_{i=1}^N
    1_{\{x_i=\tilde x\}}
  $
  is the number of occurrences of $\tilde x$.
\end{theorem}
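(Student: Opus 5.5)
We argue by contradiction: assume that for infinitely many $N$ every sequence satisfies $|C_N(\cdot)|\le f(k)$, and build an adaptive decision rule and a measure $P$ under which \eqref{eq:coverage} fails with probability greater than $\delta$. Since $|\mathbf{Z}|=1$, the only unknown is $\theta:=P(y\mid\tilde x)$. We take $Y_n$ Bernoulli (event $Y_n=y$) with parameter $\theta\in\{1/2,1/2+\epsilon\}$ given $X_n=\tilde x$. The decision maker can use the past, so he may choose $X_n=\tilde x$ up to a data-dependent stopping time and $X_n\ne\tilde x$ afterwards. The count $k$ of occurrences of $\tilde x$ is then a stopping time $\tau\le N$ for the Bernoulli sequence $\xi_1,\xi_2,\dots$ of outcomes $1_{\{Y=y\}}$ on the $\tilde x$-trials. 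The interval $C_N$ produced is thus, in effect, a confidence interval built from a stopped sample of length $\tau$, with width at most $f(\tau)=o(\sqrt{\ln\ln\tau/\tau})$.

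The stopping rule exploits the law of the iterated logarithm. Under $\theta=1/2$ with $S_m:=\sum_{i\le m}(\xi_i-1/2)$, we stop at the first $m$ in a range $[M,N]$ with $S_m/m\ge c\sqrt{\ln\ln m/m}$, for some constant $c<1/\sqrt2$, and otherwise stop at $N$. By the LIL (in a finite-horizon, quantitative form, e.g.\ via a geometric-epoch argument with blocks $m\in[2^j,2^{j+1})$ and near-independence of block increments), the probability of stopping before $N$ tends to $1$ as $N\to\infty$ with $M$ fixed large. On that event the empirical mean at time $\tau$ exceeds $1/2$ by at least $c\sqrt{\ln\ln\tau/\tau}$. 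The interval, of width at most $f(\tau)\ll\sqrt{\ln\ln\tau/\tau}$, must contain $1/2$ with probability $\ge1-\delta$ under this $P$.

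The contradiction comes from comparing with alternatives $\theta'$ near the empirical mean. We first show that any valid interval essentially has to contain a neighbourhood of the empirical mean $\hat\theta_\tau$ of radius comparable to $\sqrt{1/\tau}$ times a slowly growing factor. Combined with the interval containing $1/2$ and having width $f(\tau)$, this forces $\hat\theta_\tau-1/2\le f(\tau)+O(1/\sqrt\tau)=o(\sqrt{\ln\ln\tau/\tau})$, contradicting the stopping condition on the high-probability event. A cleaner route uses several hypotheses $\theta_j=1/2+c\sqrt{\ln\ln 2^j/2^j}$, one for each epoch $j$. The intervals at stopping times within epoch $j$ are too narrow to contain both $1/2$ and $\theta_j$. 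Under $\theta=1/2$ the LIL-stopping makes the sample look like a sample from $\theta_j$ for some $j$. A change-of-measure or likelihood-ratio argument (the likelihood ratio of $\theta_j$ vs.\ $1/2$ on the stopped sample is bounded below on the stopping event) then transfers the coverage failure to $\theta_j$ with probability $>\delta$.

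The main obstacle is this last quantitative step: making the change of measure rigorous for a finite $N$ and a fixed $\delta$ arbitrarily close to $1$, i.e.\ coverage $1-\delta$ possibly tiny. It requires choosing $c$ and the number of epochs so that the union over epochs of the events \{stopping in epoch $j$ with the interval excluding $\theta_j$\} has total probability exceeding $\delta$ under some single $\theta_j$. I would first try an averaging (pigeonhole) argument. Summing over the $\asymp\ln N$ epochs, the mixture of the $\theta_j$ is compared with $\theta=1/2$ via Ville/LIL-type bounds. Since the mixture likelihood ratio is large on the LIL stopping event, at least one $\theta_j$ must have its coverage violated with probability greater than $\delta$ once $N$ is large.
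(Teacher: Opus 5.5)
Your opening move is in the right spirit: argue by contradiction and let the adaptive decision maker switch away from $\tilde x$ at a stopping time, so that $k=\tau$. However, the step that actually yields the contradiction is missing, and the bridges you sketch for it fail.

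\begin{itemize}
\item Validity of $C_N$ does not force the interval to contain a neighbourhood of $\hat\theta_\tau$. Coverage is a probability statement under each fixed $\theta$. Nothing stops $C_N$ from excluding $\hat\theta_\tau$ on the crossing event, e.g.\ by shifting down by about $c\sqrt{\ln\ln\tau/\tau}$, which is exactly what protects $1/2$.
\item The mixture step fails for a structural reason. For any mixture $Q$ of the $\P_{\theta_j}$, the likelihood ratio $dQ/d\P_{1/2}$ on the first $n$ observations is a nonnegative $\P_{1/2}$-martingale started at $1$. By optional stopping (or the very Ville inequality you invoke), it is $\ge K$ at $\tau$ with $\P_{1/2}$-probability at most $1/K$. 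It therefore cannot be large on the LIL stopping event, whose $\P_{1/2}$-probability tends to $1$.
\item The epoch-by-epoch version does not close either. Let $B_j:=\{\tau\in[2^j,2^{j+1}),\ 1/2\in C_N\}$. On this event the likelihood ratio of \emph{any} alternative against $1/2$ is at most about $\exp(2S_\tau^2/\tau)\approx(\ln\tau)^{2c^2}$. Moreover $2c^2<1$ is forced by your need for the crossing to occur with probability tending to $1$. So assuming $\P_{\theta_j}(\theta_j\notin C_N)\le\delta$ for all $j$ gives only $\P_{1/2}(B_j)\lesssim\delta j^{-2c^2}$. Since $\sum_j j^{-2c^2}=\infty$, this is compatible with $\sum_j\P_{1/2}(B_j)\ge 1-\delta-o(1)$: the likelihood gain merely offsets the small probability of a first crossing in epoch $j$.
\end{itemize}

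In short, with a stopping rule fixed in advance you are trying to reprove an LIL-type lower bound from scratch. The tools you name do not deliver it, certainly not for $\delta$ close to $1$, which the theorem allows.

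The missing idea is to let the adversarial decision rule depend on the true parameter and on $C_N$ itself.
\begin{itemize}
\item Make $Y_n$ deterministic whenever $X_n\ne\tilde x$. The paper takes $P(Y_n=1\mid X_n=0)=0$; you need something like this anyway for $C_N$ to be a function of the stopped sample.
\item Play $\tilde x$ until the first $n$ with $p\notin I^N_n(y_1,\dots,y_n):=C_N(\tilde x,y_1,\dots,\tilde x,y_n,\text{padding})$, then switch away from $\tilde x$.
\item Fixed-horizon validity of $C_N$ under this $P$ says exactly that the padded intervals $I^N_n$, $n\le N$, cover $p$ simultaneously with probability at least $1-\delta$. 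They thus form a finite confidence sequence, and on the bad horizons $N_j$ they satisfy $|I^N_n|\le f(n)$.
\item A diagonal compactness argument over the countably many binary strings yields an infinite Bernoulli confidence sequence with $|I_n|\le f(n)$.
\item This contradicts the Duchi--Haque lower bound for confidence sequences (their Proposition~11).
\end{itemize}
Time-uniform coverage along every path is what makes the $\sqrt{\ln\ln n/n}$ lower bound available. Coverage at a single stopping time chosen independently of $C_N$ and $p$ does not give it to you.
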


\section{Applications to prediction sets}
\label{sec:prediction}

Theorems~\ref{thm:IID-back}--\ref{thm:anytime-back}
provide confidence intervals for causal effects,
whereas in \cite{Vovk/Wang:arXiv2603full} we were interested in prediction sets for $Y$.
In order to discuss connections
between our results here and the strong interpretation in \cite{Vovk/Wang:arXiv2603full},
in this section we will state a corollary of the toy version of Theorem~\ref{thm:core-back}
for the binary case of Figure~\ref{fig:back},
with~\eqref{eq:core-back-2} replaced by~\eqref{eq:core-back-2-toy},
giving prediction sets;
similar corollaries can be easily deduced
from Theorems~\ref{thm:IID-back}--\ref{thm:anytime-back} %
as well.
We consider the strong interpretation of Figure~\ref{fig:back},
as in Sect.~\ref{sec:core},
with $(X_n,Y_n,Z_n)$, $n\in[N]$,
complemented by another observation $Y$
with the probabilities of $Y=y$, $y\in\mathbf{Y}$,
given by the right-hand side of \eqref{eq:back}
for a fixed $\tilde x$.
Remember that $\mathbf{X}=\mathbf{Y}=\mathbf{Z}=\{0,1\}$.

\begin{corollary} %
  Fix $\delta>0$, $N$, and $\tilde x\in\mathbf{X}$.
  Then
  \begin{multline*} %
    \Gamma
    :=
    \Biggl\{
      y\in\mathbf{Y}:
      \sum_{z\in\{0,1\}}
      \hat p(z)
      \dhat p(y\mid\tilde x,z)
      +
      2\sqrt{\frac{\ln\frac{12}{\delta}}{2N}} \\
      +
      \sum_{z\in\{0,1\}}
      \sqrt{\frac{2\ln\lb\llfloor\#\tilde x z\rrfloor+\ln\frac{20}{\delta}}{2\llfloor\#\tilde x z\rrfloor}}
      >
      \frac{\delta}{2}
    \Biggr\}
  \end{multline*}
  is a $(1-\delta)$-prediction set for $Y$.
\end{corollary}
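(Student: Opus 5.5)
We combine the toy version of Theorem~\ref{thm:core-back} with a Markov-type argument for the new observation $Y$. Write $p(y):=P(y\mid\Do(\tilde x))$ for the right-hand side of \eqref{eq:back}, which is the distribution of $Y$. Let $U(y)$ be the upper end of the confidence interval of Theorem~\ref{thm:core-back}, in its binary form \eqref{eq:core-back-2-toy}, run at level $1-\delta/2$ for this fixed $y$; replacing $\delta$ by $\delta/2$ turns $6/\delta$ into $12/\delta$ and $10/\delta$ into $20/\delta$. Thus $\Gamma=\{y: U(y)>\delta/2\}$, where $U(y)$ is the expression on the left of the inequality defining $\Gamma$.

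We bound the miscoverage by splitting on whether the upper end is valid:
\[
  P(Y\notin\Gamma)
  \le
  P\bigl(\exists y:\ p(y)>U(y)\bigr)
  +
  P\bigl(Y\notin\Gamma,\ p(y')\le U(y')\ \forall y'\bigr).
\]
On the second event, $Y\notin\Gamma$ means $U(Y)\le\delta/2$, so $p(Y)\le\delta/2$. Since $Y$ is independent of the training data (it is an extra draw from the mutilated model), conditioning on the training data gives
\[
  P\bigl(Y\notin\Gamma,\ p\le U\bigr)
  \le
  \sum_{y:\,p(y)\le\delta/2} p(y)
  \le
  \left|\mathbf{Y}\right|\frac{\delta}{2}
  =\delta .
\]
This uses $\left|\mathbf{Y}\right|=2$.

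That leaves $\delta$ for the first term, which is too much. The budget therefore has to be rebalanced, and this is the main obstacle. We only need the upper bound for a single $y$: at most one $y\in\{0,1\}$ can have $p(y)\le\delta/2<1/2$, namely $y_0:=\arg\min_y p(y)$, which is nonrandom. So the second term is at most $p(y_0)\,1\{p(y_0)\le\delta/2\}\le\delta/2$. For the first term it suffices to control the event $p(y_0)>U(y_0)$; for other $y$, $p(y)>\delta/2$ is irrelevant to exclusion. More precisely, $Y\notin\Gamma$ requires either that $Y=y$ with $U(y)\le\delta/2$ and $p(y)\le U(y)$, or that $p(Y)>U(Y)$.

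The refined decomposition is
\[
  P(Y\notin\Gamma)\le P\bigl(p(Y)>U(Y)\bigr)+P\bigl(p(Y)\le\delta/2\bigr).
\]
The second term is at most $\delta/2$, because only $y_0$ can satisfy $p(y)\le\delta/2$. By independence, the first term equals $\sum_y p(y)\,P\bigl(p(y)>U(y)\bigr)$. Each $P(p(y)>U(y))$ is at most $\delta/2$ by Theorem~\ref{thm:core-back} at level $\delta/2$, where a one-sided bound is implied by the two-sided interval. Since the weights $p(y)$ sum to $1$, the first term is at most $\delta/2$, and the total is at most $\delta$.

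The remaining checks are routine. First, Theorem~\ref{thm:core-back} must hold for each fixed $y$ separately, which is how it is stated. Second, the constants with $\delta/2$ must match $12/\delta$ and $20/\delta$. Third, $Y$ must be independent of the first $N$ observations, which is part of the setup.
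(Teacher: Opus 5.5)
Your final decomposition is correct provided $Y$ is independent of the training data, but it is organized differently from the paper's proof. The paper fixes a label $y$ and bounds $P(Y=y,\ y\notin\Gamma)\le\delta/2$ by splitting on the nonrandom value $p(y)$:
if $p(y)>\delta/2$, then $y\notin\Gamma$ forces the upper end of the level-$(1-\delta/2)$ interval to be at most $\delta/2<p(y)$, an event of probability at most $\delta/2$ (the event $Y=y$ is simply dropped);
otherwise $P(Y=y)=p(y)\le\delta/2$.
Summing over the two labels gives $\delta$.

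This uses only the marginal law of $Y$, which is all the setup specifies. It needs neither independence nor your remark that at most one label can be rare.

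In your decomposition, a label can contribute to both terms. To keep the failure term at $\delta/2$ you therefore have to weight $P(p(y)>U(y))$ by $p(y)$, and that is where independence enters. The assumption does hold if $Y$ is the outcome of step $N+1$ under $\Do(\tilde x)$ in the strong interpretation, but the statement does not include it. Without it, the first term is only bounded by $\sum_y P(p(y)>U(y))\le\delta$. The fix is to intersect that event with $\{p(Y)>\delta/2\}$, which turns your argument into the paper's.

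In return, under independence your failure term stays at the single level $\delta/2$ however large $\mathbf{Y}$ is. The paper's per-label bound pays it once per non-rare label.

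Remove the middle paragraph. Its claims that only one upper bound is needed, and that for the first term it suffices to control $p(y_0)>U(y_0)$, are false: a label $y\ne y_0$ is excluded whenever $U(y)\le\delta/2<p(y)$. Your final decomposition correctly uses the bound for every $y$. Also state that ``only $y_0$ can be rare'' assumes $\delta<1$; the case $\delta\ge1$ is trivial.
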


\begin{proof}
  We are required to prove that $Y\notin\Gamma$ with probability at most $\delta$.
  Let us fix $y\in\mathbf{Y}$ and prove
  that the probability of the conjunction of $Y=y$ and $y\notin\Gamma$
  is at most $\delta/2$.
  We will use the confidence interval
  $\eqref{eq:core-back-1}\pm\eqref{eq:core-back-2-toy}$
  with $\delta/2$ in place of $\delta$.
  Consider two cases:
  \begin{itemize}
  \item
    Suppose the right-hand-side of \eqref{eq:back} exceeds $\delta/2$.
    If $y\notin\Gamma$, the right end-point
    $\eqref{eq:core-back-1}+\eqref{eq:core-back-2-toy}$
    (with $\delta$ replaced by $\delta/2$)
    of the confidence interval is at most $\delta/2$,
    and the probability of this is at most $\delta/2$
    by the definition of a confidence interval.
  \item
    Otherwise, $Y=y$ with probability at most $\delta/2$
    by the definition of $Y$.
    \qedhere
  \end{itemize}
\end{proof}

This procedure is sub-optimal for several reasons.
One of them is that Hoeffding's inequality applied to the Bernoulli model
can be greatly improved when the probability of error is small;
see, e.g., Vapnik's \cite[Sects.~4.2 and~4.4]{Vapnik:1998} use
of multiplicative Chernoff inequalities
(in what he calls optimistic and pessimistic settings of learning problems).

The analogous corollary of Theorem~\ref{thm:IID-back} %
becomes more comparable with the results that we obtain in \cite{Vovk/Wang:arXiv2603full}.
However, the prediction sets derived in \cite{Vovk/Wang:arXiv2603full} are based on e-values
(are ``e-prediction sets'') whereas the prediction sets here are traditional ones.
We expect that methods of this paper lead to much looser results,
but their advantage is that they also work for the strong interpretation.

\section{Conclusion}
\label{sec:conclusion}

In this paper we derive confidence intervals and confidence sequences
for causal effects in IID and sequential non-IID settings.
These are some directions of further research:
\begin{itemize}
\item
  How do we establish analogues of Theorems~\ref{thm:core-back} and~\ref{thm:anytime-back}
  for the front-door criterion?
\item
  In this paper we concentrate on upper bounds for achievable widths
  of confidence intervals, and our results need to be complemented by lower bounds.
  Theorem~\ref{thm:necessity} is a first step in this direction.
\item
  This paper is based on the standard measure-theoretic probability
  \cite{Kolmogorov:1933,Shiryaev:2016,Shiryaev:2019}.
  To make our results as strong as possible,
  we could try and present them in the language of game-theoretic probability
  \cite{Shafer/Vovk:2019}
  (this was listed as direction of further research already in \cite[Sect.~5]{Vovk:1996Gam}).
\end{itemize}

\subsection*{Acknowledgments}

In literature search and brainstorming we used OpenAI Codex;
in particular, it noticed that Theorem~\ref{thm:necessity}
follows from the results of \cite{Duchi/Haque:2024}.
We take full responsibility for all statements made in this paper.

\appendix
\section{Proofs for Sects.~\ref{sec:IID}--\ref{sec:anytime}}
\label{app:proofs-1}

In the following two propositions
we consider an IID Bernoulli sequence $\xi_1,\xi_2,\dots$
with probability of success $p$
and use the notation $\hat p_n:=\frac1n\sum_{i=1}^n\xi_i$
for the standard estimate of $p$.
We start from a standard confidence interval for the probability of success
given by Hoeffding's inequality.

\begin{proposition}\label{prop:Okamoto}
  For a fixed $n$ and $\delta>0$,
  \begin{equation}\label{eq:interval}
    I
    :=
    \left\{
      p\in[0,1]:
      \left|
        p - \hat p_n
      \right|
      <
      \sqrt{\frac{\ln\frac{2}{\delta}}{2n}}
    \right\}
  \end{equation}
  is a $(1-\delta)$-confidence interval for $p$,
  in the sense of
  \begin{equation*}
    \P
    \left(
      p\in I
    \right)
    \ge
    1-\delta.
  \end{equation*}
\end{proposition}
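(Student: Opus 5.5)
The plan is to apply Hoeffding's inequality to the bounded IID variables $\xi_1,\dots,\xi_n\in\{0,1\}$, two-sidedly, and then simply invert the resulting tail bound. Set $\epsilon:=\sqrt{\ln(2/\delta)/(2n)}$. Since each $\xi_i$ takes values in $[0,1]$ and $\mathbb{E}\,\hat p_n=p$, Hoeffding's inequality gives, for every $t>0$,
\[
  \P\left(\hat p_n-p\ge t\right)\le e^{-2nt^2}
  \quad\text{and}\quad
  \P\left(p-\hat p_n\ge t\right)\le e^{-2nt^2}.
\]

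Taking $t=\epsilon$ and using a union bound, we obtain $\P(|p-\hat p_n|\ge\epsilon)\le 2e^{-2n\epsilon^2}=2\cdot\frac{\delta}{2}=\delta$. The event $p\in I$ is exactly the complement of $\{|p-\hat p_n|\ge\epsilon\}$, because $p\in[0,1]$ holds automatically. Hence $\P(p\in I)\ge1-\delta$.

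The strict inequality in the definition of $I$ is consistent with this argument, since we bounded the probability of the non-strict event $|p-\hat p_n|\ge\epsilon$. No step is genuinely difficult. The only points needing care are that the one-sided Hoeffding bounds apply to the event with $\ge$, and that the case $\delta\ge 2$ (where the logarithm is nonpositive) is trivial, because the claimed bound $1-\delta\le -1$ then holds vacuously. The case $1\le\delta<2$ is trivial for the same reason: the bound $1-\delta\le0$ holds automatically.

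If one prefers a self-contained derivation rather than quoting Hoeffding's inequality, the plan is to use Hoeffding's lemma. It gives $\mathbb{E}e^{\lambda(\xi_i-p)}\le e^{\lambda^2/8}$. By independence and Markov's inequality, $\P(\hat p_n-p\ge t)\le e^{-\lambda nt+n\lambda^2/8}$, and the choice $\lambda=4t$ yields $e^{-2nt^2}$. The lower tail follows by symmetry.
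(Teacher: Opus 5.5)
Your proposal is correct and is essentially the paper's argument: it likewise invokes Hoeffding's (or Okamoto's) two-sided bound $\P(|p-\hat p_n|\ge c)\le 2\exp(-2c^2n)$ and inverts it at $c=\sqrt{\ln(2/\delta)/(2n)}$. Your extra remarks on the strict inequality, the trivial large-$\delta$ cases, and the Hoeffding-lemma derivation are accurate but go beyond what the paper spells out.
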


\begin{proof}
  By Hoeffding's inequality
  (or Okamoto's earlier result \cite[Theorem~1]{Okamoto:1959}),
  for all $c>0$,
  \[
    \P
    \left(
      \left|
        p - \hat p_n
      \right|
      \ge
      c
    \right)
    \le
    2\exp(-2c^2n),
  \]
  which gives the confidence interval \eqref{eq:interval}.
\end{proof}

We will also need the following confidence sequence.

\begin{proposition}\label{prop:LIL}
  For each $\delta>0$,
  \begin{equation}\label{eq:sequence}
    I_n
    :=
    \left\{
      p:
      \left|
        p - \hat p_{\llfloor n\rrfloor}
      \right|
      <
      \sqrt{\frac{2\ln\lb\llfloor n\rrfloor+\ln\frac{3.3}{\delta}}{2\llfloor n\rrfloor}}
    \right\}
  \end{equation}
  is a $(1-\delta)$-confidence sequence,
  i.e.,
  \begin{equation}\label{eq:confidence}
    \P
    \left(
      \forall n:
      p\in I_n
    \right)
    \ge
    1-\delta.
  \end{equation}
\end{proposition}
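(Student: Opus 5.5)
The plan is a union bound over dyadic scales, using Proposition~\ref{prop:Okamoto} at each scale. The interval $I_n$ depends on $n$ only through $\llfloor n\rrfloor$, so the event $\{\forall n: p\in I_n\}$ is the intersection over $k=1,2,\dots$ of the events
\[
  A_k:=\left\{\left|p-\hat p_{2^k}\right|<\sqrt{\frac{2\ln k+\ln\frac{3.3}{\delta}}{2\cdot 2^k}}\right\},
\]
since $\lb\llfloor n\rrfloor=k$ when $\llfloor n\rrfloor=2^k$. For $n<2$ we have $\llfloor n\rrfloor=1$ and $\lb 1=0$, so $\ln\lb 1=-\infty$ and $I_1$ is empty. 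I will therefore read the statement as concerning $n\ge2$, or with the convention $\ln\lb1:=\infty$ used after Theorem~\ref{thm:core-back}, under which $I_1=[0,1]$.

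For each fixed $k\ge1$ I apply Hoeffding's inequality, as in the proof of Proposition~\ref{prop:Okamoto}, with sample size $2^k$ and threshold $c_k$ defined by $2c_k^2 2^k=2\ln k+\ln(3.3/\delta)$. This gives
\[
  \P(A_k^c)\le 2\exp(-2c_k^2 2^k)=2\cdot\frac{\delta}{3.3\,k^2}.
\]
Summing over $k\ge1$ gives
\[
  \P\Bigl(\bigcup_k A_k^c\Bigr)\le\frac{2\delta}{3.3}\cdot\frac{\pi^2}{6}\approx\frac{2\cdot1.6449}{3.3}\,\delta\approx0.997\,\delta\le\delta.
\]
This establishes \eqref{eq:confidence}. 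The constant $3.3$ is chosen precisely so that $3.3\ge\pi^2/3\approx3.2899$.

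The only real work is the bookkeeping: the index set is the dyadic $k\ge1$ (because $\llfloor n\rrfloor$ is defined only via $2^k$ with $k\ge1$ for $n\ge2$), and the $k=0$ term, i.e.\ $n=1$, is handled by the convention above. I expect no genuine obstacle, since no martingale maximal inequality is needed: the estimate is evaluated only at the deterministic times $2^k$, so a plain union bound suffices.
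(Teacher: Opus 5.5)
Your proof is correct and follows the paper's argument. Both use a union bound over the dyadic times $2^k$, apply Hoeffding at each scale with weights proportional to $1/k^2$ summed via $\zeta(2)=\pi^2/6$, and handle $n=1$ by the convention $\ln\lb1:=\infty$. The only cosmetic difference is that you put $3.3$ straight into the threshold and sum to about $0.997\,\delta$, whereas the paper takes $\delta_k=6\delta/(\pi^2k^2)$ exactly and then uses $\pi^2/3<3.3$.
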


\begin{proof}
  Similar confidence sequences can be obtained
  using Ville's \cite{Ville:1939} method of continuous mixtures of test martingales
  or its discrete analogue \cite[Sect.~5.1]{Shafer/Vovk:2019},
  but we will model our proof on \cite[Sect.~E]{Voracek:2024}.
  Fix $p\in[0,1]$.

  Since $\zeta(2)=\pi^2/6$,
  we can split the significance level $\delta$ into the series
  $\delta=\sum_{k=1}^{\infty}\delta_k$,
  where
  \[
    \delta_k
    =
    \frac{6}{\pi^2 k^2}
    \delta.
  \]
  Applying \eqref{eq:interval} with $n_k=2^k$ in place of $n$ and $\delta_k$ in place of $\delta$
  gives
  \begin{equation}\label{eq:last}
    \P
    \left(
      \left|
        p - \hat p_{n_k}
      \right|
      \ge
      \sqrt{\frac{\ln\frac{\pi^2k^2}{3\delta}}{2n_k}}
    \right)
    \le
    \delta_k.
  \end{equation}
  Finally, \eqref{eq:last} implies \eqref{eq:confidence} since
  $
    \pi^2/3
    \approx
    3.29 < 3.3
  $.
  (This argument works for $n\ge2$;
  otherwise,
  the inequality in \eqref{eq:confidence} is trivial
  since our convention,
  introduced in Sect.~\ref{sec:core}, is that
  $\ln\lb1:=\infty$.)
\end{proof}

Next we need a simple result from interval arithmetic.
We are only interested in subintervals of $[0,1]$.
Let $c\pm\Delta c$, where $c\in\R$ and $\Delta c\ge0$,
stand for the interval
\[
  c\pm\Delta c
  :=
  [c-\Delta c, c+\Delta c]
  \cap
  [0,1].
\]
For a binary operation $*$ on the reals (we are mostly interested in addition and multiplication),
we define its result on intervals pointwise:
\[
  I_1 \mathbin{*} I_2
  :=
  \{p_1\mathbin{*}p_2:
    p_1\in I_1, p_2\in I_2\}
  \cap
  [0,1].
\]

\begin{lemma} %
  For any two intervals $a\pm\Delta a$ and $b\pm\Delta b$,
  \begin{align}
    (a\pm\Delta a) + (b\pm\Delta b) &\subseteq (a+b)\pm(\Delta a+\Delta b), \label{eq:add}\\
    (a\pm\Delta a) \times (b\pm\Delta b) &\subseteq (a\times b)\pm(\Delta a+\Delta b). \label{eq:times}
  \end{align}
\end{lemma}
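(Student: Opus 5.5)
Since both sides are subsets of $[0,1]$, I will prove each inclusion pointwise. I take an arbitrary element of the left-hand side and show that it lies in the right-hand side. By the definition of interval arithmetic, such an element has the form $p_1*p_2\in[0,1]$ with $p_1\in a\pm\Delta a$ and $p_2\in b\pm\Delta b$. The definition of $c\pm\Delta c$ then gives $p_1,p_2\in[0,1]$, $|p_1-a|\le\Delta a$ and $|p_2-b|\le\Delta b$. The intersection with $[0,1]$ is automatic on both sides because $p_1*p_2\in[0,1]$ by assumption. So in each case it suffices to bound $|p_1*p_2-a*b|$ by $\Delta a+\Delta b$.

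For the addition inclusion \eqref{eq:add} I will use the triangle inequality:
\[
  |(p_1+p_2)-(a+b)|\le|p_1-a|+|p_2-b|\le\Delta a+\Delta b.
\]
This needs nothing beyond the two bounds above.

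For the multiplication inclusion \eqref{eq:times} I plan to use the standard telescoping decomposition
\[
  p_1p_2-ab=(p_1-a)p_2+a(p_2-b),
\]
which gives $|p_1p_2-ab|\le|p_1-a|\,|p_2|+|a|\,|p_2-b|$. Here $|p_2|\le1$ because $p_2\in[0,1]$. The remaining issue, which I expect to be the only real obstacle, is the factor $|a|$. The lemma does not require the midpoint $a$ to lie in $[0,1]$, and an interval $a\pm\Delta a$ may be nonempty with $a$ outside $[0,1]$, say $a>1$. To avoid this, I will first try to choose the decomposition according to the position of the midpoints. If $a\in[0,1]$, the bound $|a|\le1$ finishes the argument. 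If instead $b\in[0,1]$, I use the symmetric split $p_1p_2-ab=p_1(p_2-b)+b(p_1-a)$ together with $|p_1|\le1$ and $|b|\le1$.

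When neither midpoint lies in $[0,1]$, the fallback is a direct estimate. If, say, $a>1$, then nonemptiness forces $\Delta a\ge a-1$. I then bound $|p_1p_2-ab|$ by comparing with $1\cdot b$, or by using $|a|\le1+(a-1)\le 1+\Delta a$ and absorbing the extra $(a-1)|p_2-b|$ term. This absorption may fail in general: with $a=b=2$ and $\Delta a=\Delta b=1$ the left side contains $1$, but $ab=4$ and $4-(\Delta a+\Delta b)=2>1$. In that case I will state, as in all uses in this paper, that the midpoints are standard estimates lying in $[0,1]$, and prove \eqref{eq:times} under that proviso, where the argument above is complete. If the case is empty, for example because one midpoint is in $[0,1]$, the lemma holds as stated.
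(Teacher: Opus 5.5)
Your proof is correct, and it takes a different route from the paper's. The paper argues through endpoints. Since the factors are subintervals of $[0,1]$, the product set is $[((a-\Delta a)\vee0)((b-\Delta b)\vee0),\,((a+\Delta a)\wedge1)((b+\Delta b)\wedge1)]$, so \eqref{eq:times} reduces to two endpoint inequalities. The truncations are dropped ``without loss of generality'', and the upper inequality $(a+\Delta a)(b+\Delta b)-ab\le\Delta a+\Delta b$ is justified by an area picture in the unit square. That buys an explicit description of the product interval. Your pointwise bound via $p_1p_2-ab=(p_1-a)p_2+a(p_2-b)$ buys more generality: it needs no case analysis on the truncations, does not use that the sets are intervals, and shows that $|a|\le1$ or $|b|\le1$ alone suffices.

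Your counterexample $a=b=2$, $\Delta a=\Delta b=1$ is also right. With $c\in\R$ allowed, as in the paper's definition of $c\pm\Delta c$, \eqref{eq:times} is false, and the paper's proof tacitly uses $a,b\le1$. Its ``obvious'' inequality $(a-\Delta a)(b-\Delta b)\ge ab-\Delta a-\Delta b$ is equivalent to $\Delta a(1-b)+\Delta b(1-a)+\Delta a\Delta b\ge0$, which fails in your example. Likewise, assuming $a+\Delta a\le1$ without loss of generality presupposes $a\le1$.

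One refinement to your justification of the proviso. When the lemma is iterated in Corollary~\ref{cor:combination}, the midpoints at multiplication nodes need not be standard estimates; for \eqref{eq:front}, one factor has midpoint $\sum_x\hat p(y\mid x,z)\hat p(x)$. What must be checked is that every such midpoint lies in $[0,1]$. This holds for \eqref{eq:back} and \eqref{eq:front}, but not for arbitrary polynomial expressions. For example, take $(a_1+a_2)(a_3+a_4)$ with all $a_i=0.9$ and $\Delta a_i=0.5$: the left-hand side of \eqref{eq:combination} is $[0.64,1]$, while the right-hand side is $3.24\pm2=\emptyset$.
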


\begin{figure}[bt]
  \begin{center}
    \usetikzlibrary {patterns}
    \begin{tikzpicture}[scale=0.5]
      \fill[color=yellow!40!white] (4,0) -- (6,0) -- (6,8) -- (4,8);
      \fill[color=yellow!40!white] (0,3) -- (0,6) -- (8,6) -- (8,3);
      \fill[color=yellow] (4,3) -- (6,3) -- (6,6) -- (4,6);
      \draw (0,0) -- (8,0);
      \draw (0,3) -- (8,3);
      \draw (0,6) -- (8,6);
      \draw (0,8) -- (8,8);
      \draw (0,0) -- (0,8);
      \draw (4,0) -- (4,8);
      \draw (6,0) -- (6,8);
      \draw (8,0) -- (8,8);
      \filldraw [gray]
        (4,0) circle [radius=2pt]
        (6,0) circle [radius=2pt]
        (0,3) circle [radius=2pt]
        (0,6) circle [radius=2pt];
      \draw (4,0) node[anchor=north]{$a$} -- (6,0) node[anchor=north]{$a+\Delta a$};
      \draw (0,3) node[anchor=east]{$b$} -- (0,6) node[anchor=east]{$b+\Delta b$};
      \fill[pattern=dots] (4,3) -- (4,0) -- (6,0) -- (6,6) -- (0,6) -- (0,3);
    \end{tikzpicture}
  \end{center}
  \caption{Illustration of an inequality.}
  \label{fig:proof}
\end{figure}

\begin{proof}
  The inclusion~\eqref{eq:add} is obvious, so we will only prove \eqref{eq:times}.
  The latter inclusion reduces to the conjunction of two inequalities:
  \begin{align}
    ((a-\Delta a)\vee0) ((b-\Delta b)\vee0) &\ge a b - (\Delta a+\Delta b), \label{eq:times-1} \\
    ((a+\Delta a)\wedge1) ((b+\Delta b)\wedge1) &\le a b + (\Delta a+\Delta b). \label{eq:times-2}
  \end{align}
  In the inequality~\eqref{eq:times-1} we can assume, without loss of generality,
  $a-\Delta a\ge0$ and $b-\Delta b\ge0$, which makes it obvious.
  In~\eqref{eq:times-2} we can assume, without loss of generality,
  $a+\Delta a\le1$ and $b+\Delta b\le1$, which reduces it to
  \begin{equation}\label{eq:final}
    (a+\Delta a)(b+\Delta b) - a b \le \Delta a+\Delta b.
  \end{equation}
  The last inequality is illustrated in Figure~\ref{fig:proof}:
  the dotted area of the plot represents the left-hand side of \eqref{eq:final},
  and the yellow area represents the right-hand side of \eqref{eq:final}
  (with the darker yellow area counted twice).
\end{proof}

\begin{figure}
  \begin{center}
    \begin{forest}
      [{$+$}
        [{$\times$}[{$P(y\mid\tilde x,0)$}][{$P(0)$}]]
        [{$\times$}[{$P(y\mid\tilde x,1)$}][{$P(1)$}]]
        [{$\times$}[{$P(y\mid\tilde x,2)$}][{$P(2)$}]]
      ]
    \end{forest}
  \end{center}
  \caption{The binary tree representing the polynomial expression \eqref{eq:back}
    over the formal variables $P(y\mid\tilde x,z)$ and $P(z)$
    in the case of $\mathbf{Z}=\{0,1,2\}$}
  \label{fig:poly}
\end{figure}

We will distinguish between (multivariate) polynomials and polynomial expressions.
A \emph{polynomial expression} is formed from a finite number of formal variables
by repeatedly applying the operations of multiplication and addition;
we do not allow constants (equivalently,
our polynomials and polynomial expressions are over the field $\{0,1\}$
and have a zero constant term).
A polynomial expression can be represented as a tree
(let us call it a \emph{polynomial tree}), not necessarily binary,
such as Figure \ref{fig:poly} in the case of the polynomial expression
given by the right-hand side of \eqref{eq:back} for $\mathbf{Z}=\{0,1,2\}$.
The same formal variable may be used several times.
A \emph{polynomial} is an equivalence class of polynomial expressions
where we do not distinguish polynomial expressions that reduce to each other
by applying the usual laws of commutativity, associativity, and distributivity
(associativity was already used implicitly
when we allowed non-binary multiplications and additions in our trees).
Without loss of generality we may assume that the operations
at different levels of polynomial trees alternate
(so at each level we have the same operation, ``$+$'' or ``$\times$'',
and perhaps some formal variables as leaf nodes;
the operations in adjacent levels are different).

\begin{corollary}\label{cor:combination}
  Let $E$ be a polynomial expression involving $m$ distinct formal variables.
  Define $E^*$ to be the polynomial obtained from $E$
  by replacing all multiplications by additions.
  Then, for any intervals $a_i\pm\Delta a_i$, $i=1,\dots,m$,
  \begin{equation}\label{eq:combination}
    E(a_1\pm\Delta a_1,\dots,a_m\pm\Delta a_m)
    \subseteq
    E(a_1,\dots,a_m)\pm E^*(\Delta a_i).
  \end{equation}
\end{corollary}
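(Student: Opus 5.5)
The plan is structural induction on the polynomial tree representing $E$, using the preceding lemma as the induction step.

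\emph{Base case.} If $E$ is a single formal variable $a_i$, then $E^*$ is the same variable. The left-hand side of \eqref{eq:combination} is $a_i\pm\Delta a_i$ and the right-hand side is $a_i\pm\Delta a_i$, so the inclusion holds trivially.

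\emph{Inductive step.} The root of the tree is either $+$ or $\times$ with children $E_1,\dots,E_r$. Using associativity, it suffices to treat a binary node and then iterate, so take $E=E_1*E_2$ with $*\in\{+,\times\}$. By the induction hypothesis,
\[
  E_j(a_1\pm\Delta a_1,\dots,a_m\pm\Delta a_m)\subseteq E_j(a_1,\dots,a_m)\pm E_j^*(\Delta a_i),
  \qquad j=1,2.
\]
Interval operations are monotone with respect to inclusion, because they are defined pointwise as images intersected with $[0,1]$. There is one subtlety: the definition of $E(I_1,\dots,I_m)$ as an interval expression must agree with evaluating node by node. I will adopt exactly that node-by-node reading, so that $E(\cdot)$ on intervals means the iterated interval operations. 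Then
\[
  E(\dots)\subseteq (E_1(a)\pm E_1^*)\mathbin{*}(E_2(a)\pm E_2^*),
\]
and by \eqref{eq:add} or \eqref{eq:times} this is contained in $(E_1(a)*E_2(a))\pm(E_1^*+E_2^*)$. Since $E^*=E_1^*+E_2^*$ whether $*$ is $+$ or $\times$ (the multiplication is replaced by an addition), the step is complete.

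\emph{Repeated variables.} A variable may appear several times in $E$, and the node-by-node reading treats each occurrence independently. This only enlarges the left-hand side, and $E^*$ correspondingly counts $\Delta a_i$ once per occurrence, so no problem arises.

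\emph{Main obstacle.} The main difficulty I expect is the applicability of the lemma. It is stated for intervals $c\pm\Delta c$ written with the clipping to $[0,1]$, and it implicitly needs the midpoints to be meaningful, namely $a,b\in[0,1]$. That is what allows reducing to the unclipped cases in \eqref{eq:times-1} and \eqref{eq:times-2}. I would check that the induction keeps every midpoint $E_j(a_1,\dots,a_m)$ in $[0,1]$ whenever the resulting quantity is in range, as in the intended applications where midpoints are empirical probabilities and sums like $\sum_z\hat p(z)\dhat p(y\mid\tilde x,z)\le1$. If this invariant is unavailable, I would instead verify that the proofs of \eqref{eq:add} and \eqref{eq:times} go through for nonnegative $a,b$ without the restriction to $[0,1]$, since clipping only shrinks the left-hand sides.
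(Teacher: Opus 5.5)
Your induction (monotonicity of the pointwise interval operations, then \eqref{eq:add} or \eqref{eq:times} at each node, with $E^*=E_1^*+E_2^*$) is the same as the paper's proof. The obstacle you single out is a genuine gap, both in your proposal and, unacknowledged, in the paper's proof, and neither of your two ways around it works.

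The fallback is false: \eqref{eq:times} fails for nonnegative midpoints above $1$. For $a=b=3/2$ and $\Delta a=\Delta b=1/2$, the left-hand side is $\{1\}\times\{1\}=\{1\}$, while $(ab)\pm(\Delta a+\Delta b)=[5/4,13/4]\cap[0,1]=\emptyset$. Clipping does shrink the left-hand side, but the right-hand side stays centred at the unclipped product $ab$, so clipping cannot help.

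Nor can the invariant be derived, and without it the statement itself is false. Take $E=(a_1+a_2)\times(a_3+a_4)$ with $a_i=3/4$ and $\Delta a_i=1/4$, so every $a_i\pm\Delta a_i=[1/2,1]$. Both inner sums are $[1,2]\cap[0,1]=\{1\}$, so the left-hand side of \eqref{eq:combination} is $\{1\}$; this also holds under the evaluate-then-clip reading. The right-hand side is $[9/4-1,9/4+1]\cap[0,1]=\emptyset$.

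Asking only that the final midpoint be in range, as your phrase ``whenever the resulting quantity is in range'' suggests, does not rescue the induction either. Take $((a_1+a_2)\times(a_3+a_4))\times a_5$ with the same data and $a_5=2/5$, $\Delta a_5=0$. The root midpoint is $9/10$, yet the inductive hypothesis already fails at the inner product node.

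The corollary therefore needs an extra hypothesis, and the natural one is your invariant imposed as an assumption: $E_j(a_1,\dots,a_m)\in[0,1]$ for every subexpression $E_j$ of $E$. It is enough to require this for the children of product nodes, since \eqref{eq:add} holds for arbitrary real midpoints.

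Under this hypothesis, \eqref{eq:times} is only ever applied with $a,b\in[0,1]$. That is exactly where the lemma's proof is valid, since its ``without loss of generality'' reductions need $a,b\le1$. Your induction then goes through verbatim, binarization included, because partial products of midpoints in $[0,1]$ stay in $[0,1]$.

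The hypothesis holds wherever the paper uses the corollary (Theorems~\ref{thm:IID-back}--\ref{thm:anytime-back}). There, every midpoint entering a multiplication is either an empirical frequency or an inner sum $\sum_x\hat p(y\mid x,z)\hat p(x)\le\sum_x\hat p(x)=1$. So the theorems themselves are unaffected.
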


\noindent
The expression $E^*(\Delta a_i)$ in \eqref{eq:combination}
is, of course, the result of substituting $\Delta a_i$
for the formal variables in $E^*$ and evaluating the resulting expression.

\begin{proof}[Proof of Corollary~\ref{cor:combination}]
  We proceed by induction on the height of $E$ considered as polynomial tree.
  Repeatedly applying \eqref{eq:add}, we can extend it to any finite sums.
  Similarly, repeatedly applying \eqref{eq:times}, we can extend it to any finite products.
  This gives the statement \eqref{eq:combination} when the height of $E$ is 1.
  The inductive step is also provided by extensions of \eqref{eq:add} and \eqref{eq:times}
  to finite sums and products.
\end{proof}

\begin{proof}[Proof of Theorem~\ref{thm:IID-back}]
  In the proofs of Theorems~\ref{thm:IID-back}--\ref{thm:anytime-back} %
  we will use the slightly informal notation exemplified
  by $\eqref{eq:IID-back-1}\pm\eqref{eq:IID-back-2}$
  being the confidence interval with midpoint \eqref{eq:IID-back-1}
  and half-width \eqref{eq:IID-back-2}.

  We obtain the confidence interval $\eqref{eq:IID-back-1}\pm\eqref{eq:IID-back-2}$
  for \eqref{eq:back} (involving $2\left|\mathbf{Z}\right|$ probabilities)
  by combining the confidence intervals \eqref{eq:interval}
  for each of the $2\left|\mathbf{Z}\right|$ constituent probabilities.
  To ensure the overall confidence level $1-\delta$,
  we replace the $\delta$ in \eqref{eq:interval}
  by $\delta/(2\left|\mathbf{Z}\right|)$.
  By Corollary~\ref{cor:combination} applied to~\eqref{eq:back},
  we then indeed obtain the overall $(1-\delta)$-confidence interval
  with midpoint \eqref{eq:IID-back-1} and semi-width
  \begin{equation}\label{eq:IID-back-3}
    \sum_z
    \left(
      \sqrt{\frac{\ln\frac{4\left|\mathbf{Z}\right|}{\delta}}{2N}}
      +
      \sqrt{\frac{\ln\frac{4\left|\mathbf{Z}\right|}{\delta}}{2\#\tilde x z}}
    \right),
  \end{equation}
  i.e., \eqref{eq:IID-back-2}.
  The two square roots in \eqref{eq:IID-back-3} are the half-widths
  of the confidence intervals for $P(z)$ and $P(y\mid\tilde x,z)$, respectively,
  given by Proposition~\ref{prop:Okamoto}.
\end{proof}

To derive \eqref{eq:IID-back-2-toy} for Figure~\ref{fig:back} with binary variables,
notice that in the binary case we only need confidence intervals
for three constituent probabilities,
since an interval estimate for $P(Z=0)$
gives one for $P(Z=1)$ and vice versa.
This allows us to replace $\delta$ by $\delta/3$ rather than $\delta/4$
in \eqref{eq:interval}.

\begin{proof}[Proof of Theorem~\ref{thm:IID-front}]
  The proof is similar to that of Theorem~\ref{thm:IID-back}.
  We regard \eqref{eq:front} as a multivariate polynomial
  with formal variables $P(z\mid\tilde x)$ (indexed by $z\in\mathbf{Z}$),
  $P(y\mid x,z)$ (indexed by $(x,z)\in\mathbf{X}\times\mathbf{Z}$),
  and $P(x)$ (indexed by $x\in\mathbf{X}$).
  It is clear that \eqref{eq:IID-front-1} is the midpoint
  for the confidence interval given by Corollary~\ref{cor:combination},
  so we only needed to show that \eqref{eq:IID-front-2} is the resulting half-width.

  The total number of distinct formal variables in \eqref{eq:front}
  is $K:=\left|\mathbf{X}\right|\left|\mathbf{Z}\right|+\left|\mathbf{X}\right|+\left|\mathbf{Z}\right|$:
  \begin{itemize}
  \item
    there are $\left|\mathbf{Z}\right|$ of $P(z\mid\tilde x)$,
  \item
    there are $\left|\mathbf{X}\right|\left|\mathbf{Z}\right|$ of $P(y\mid x,z)$,
  \item
    and there are $\left|\mathbf{X}\right|$ of $P(x)$.
  \end{itemize}
  To ensure that \eqref{eq:interval} are simultaneous confidence intervals
  for all of them at confidence level $1-\delta$,
  we replace the $\delta$ in \eqref{eq:interval} by $\delta/K$.

  Corollary~\ref{cor:combination} applied to the polynomial expression \eqref{eq:front}
  gives the half-width
  \begin{multline}\label{eq:version-1}
    \sum_z
    \left(
      \sqrt{\frac{\ln\frac{2K}{\delta}}{2\#\tilde x}}
      +
      \sum_x
      \left(
        \sqrt{\frac{\ln\frac{2K}{\delta}}{2\#x z}}
        +
        \sqrt{\frac{\ln\frac{2K}{\delta}}{2N}}
      \right)
    \right)\\
    =
    \left|\mathbf{X}\right|\left|\mathbf{Z}\right|
    \sqrt{\frac{\ln\frac{2K}{\delta}}{2N}}
    +
    \left|\mathbf{Z}\right|
    \sqrt{\frac{\ln\frac{2K}{\delta}}{2\#\tilde x}}
    +
    \sum_{x,z}
    \sqrt{\frac{\ln\frac{2K}{\delta}}{2\#x z}},
  \end{multline}
  i.e., it gives us \eqref{eq:IID-front-2}.
  The square roots in \eqref{eq:version-1} are again coming
  from Proposition~\ref{prop:Okamoto}.
\end{proof}

\begin{remark}\label{rem:Horner}
  As far as the number of the operations ``$+$'' and ``$\times$'' is concerned,
  the expanded form of a polynomial
  is typically less efficient than what we get by applying multivariate Horner schemes
  (see, e.g., \cite{Ceberio/Kreinovich:2004});
  there are several other methods for optimizing the number of operations
  (see, e.g., \cite{Kuipers/etal:2013}).
  Namely, applying a multivariate Horner scheme leaves the same number of additions
  and reduces the number of multiplications.
  Since the right-hand side of \eqref{eq:front} is already in the Horner form
  obtained from the expanded form
  \begin{equation}\label{eq:expanded}
    P(y\mid\Do(\tilde x))
    =
    \sum_{x,z}
    P(z\mid\tilde x)
    P(y\mid x,z)
    P(x)
  \end{equation}
  of \eqref{eq:front} by starting from the formal variables $P(z\mid\tilde x)$,
  using it leads to a tighter confidence interval than using the expanded form
  (which we will spell out at the end of this remark).
  The multivariate Horner scheme depends on the order
  in which we apply it to different variables,
  and 
  \begin{equation}\label{eq:front-2}
    P(y\mid\Do(\tilde x))
    =
    \sum_x
    P(x)
    \sum_z
    P(y\mid x,z)
    P(z\mid\tilde x)
  \end{equation}
  is what we obtain in place of \eqref{eq:front}
  when we start from the formal variables $P(x)$.
  Using \eqref{eq:front-2} will give a different half-width
  from \eqref{eq:version-1}, namely
  \begin{multline}\label{eq:version-2}
    \sum_x
    \left(
      \sqrt{\frac{\ln\frac{2K}{\delta}}{2N}}
      +
      \sum_z
      \left(
        \sqrt{\frac{\ln\frac{2K}{\delta}}{2\#x z}}
        +
        \sqrt{\frac{\ln\frac{2K}{\delta}}{2\#\tilde x}}
      \right)
    \right)\\
    =
    \left|\mathbf{X}\right|
    \sqrt{\frac{\ln\frac{2K}{\delta}}{2N}}
    +
    \left|\mathbf{X}\right|\left|\mathbf{Z}\right|
    \sqrt{\frac{\ln\frac{2K}{\delta}}{2\#\tilde x}}
    +
    \sum_{x,z}
    \sqrt{\frac{\ln\frac{2K}{\delta}}{2\#x z}}.
  \end{multline}
  We cannot say \emph{a priori} which is larger,
  \eqref{eq:version-1} or \eqref{eq:version-2}.
  It is easy to check that \eqref{eq:version-1} is less than \eqref{eq:version-2}
  if and only if
  \begin{equation*}
    \frac{\#\tilde x}{N}
    <
    \left(
      \frac{1-1/\left|\mathbf{X}\right|}{1-1/\left|\mathbf{Z}\right|}
    \right)^2.
  \end{equation*}
  Therefore, the half-width \eqref{eq:version-1} looks likely to be better
  than~\eqref{eq:version-2} overall;
  in particular, \eqref{eq:version-1} is less than~\eqref{eq:version-2}
  when $\#\tilde x/N\le1/4$ or $\left|\mathbf{X}\right|\ge\left|\mathbf{Z}\right|$.

  The expanded form \eqref{eq:expanded} gives
  \begin{multline*}
    \sum_{x,z}
    \left(
      \sqrt{\frac{\ln\frac{2K}{\delta}}{2N}}
      +
      \sqrt{\frac{\ln\frac{2K}{\delta}}{2\#\tilde x}}
      +
      \sqrt{\frac{\ln\frac{2K}{\delta}}{2\#x z}}
    \right)\\
    =
    \left|\mathbf{X}\right|\left|\mathbf{Z}\right|
    \sqrt{\frac{\ln\frac{2K}{\delta}}{2N}}
    +
    \left|\mathbf{X}\right|\left|\mathbf{Z}\right|
    \sqrt{\frac{\ln\frac{2K}{\delta}}{2\#\tilde x}}
    +
    \sum_{x,z}
    \sqrt{\frac{\ln\frac{2K}{\delta}}{2\#x z}},
  \end{multline*}
  which is worse than both \eqref{eq:version-1} and \eqref{eq:version-2}.
\end{remark}

\begin{proof}[Proof of Theorem~\ref{thm:core-back}]
  We apply Proposition~\ref{prop:Okamoto} to estimating $P(z)$
  and Proposition~\ref{prop:LIL} to estimating $P(y\mid\tilde x,z)$ in \eqref{eq:back}.
  For Proposition~\ref{prop:Okamoto} to be applicable,
  the values of $Z_n$ at different steps in Figure~\ref{fig:back-repeated}
  should be IID, and this follows from no vertex in $Z_n$ being a descendant of $X_n$
  (one of the two conditions in the definition of the back-door criterion).
  For Proposition~\ref{prop:LIL} to be applicable,
  we need the values of $Y_n$ at the steps where $X_n=\tilde x$ and $Z_n=z$ to be IID,
  and this follows from $Y_n$ and the (possibly non-IID) variables at the previous steps
  being $d$-separated by $X_n\cup Z_n$.
  To check the $d$-separation \cite[Sect.~1.2.3]{Pearl:2009},
  notice that each path from a variable at the previous step to $Y_n$
  is blocked by $X_n$ if it has an arrow emanating from $X_n$
  and is blocked by $Z_n$ if it has an arrow entering $X_n$
  from a variable at step $n$.

  Since the total number of distinct formal variables in \eqref{eq:back}
  is $2\left|\mathbf{Z}\right|$,
  we replace the $\delta$ in \eqref{eq:interval} and \eqref{eq:sequence}
  by $\delta/(2\left|\mathbf{Z}\right|)$.
  Since \eqref{eq:core-back-1} is obviously the midpoint
  of the confidence interval given by Corollary~\ref{cor:combination},
  we only check that \eqref{eq:core-back-2} is its half-width.

  Plugging the half-width of the confidence interval for $P(z)$
  given by Proposition~\ref{prop:Okamoto}
  and the half-width of the confidence interval for $P(y\mid\tilde x,z)$
  given by Proposition~\ref{prop:LIL}
  into the polynomial expression \eqref{eq:back} with the multiplications replaced by additions,
  we obtain the overall half-width
  \begin{equation*}
    \sum_z
    \left(
      \sqrt{\frac{\ln\frac{4\left|\mathbf{Z}\right|}{\delta}}{2N}}
      +
      \sqrt{
        \frac
          {2\ln\lb\llfloor\#\tilde x z\rrfloor+\ln\frac{6.6\left|\mathbf{Z}\right|}{\delta}}
          {2\llfloor\#\tilde x z\rrfloor}
      }
    \right)
  \end{equation*}
  i.e., \eqref{eq:core-back-2}.
\end{proof}

In the case of Figure~\ref{fig:back} with binary variables,
we obtain \eqref{eq:core-back-2-toy}
if we again replace $\delta$ by $\delta/3$ rather than $\delta/4$
(and round up 9.9 to 10).

\begin{proof}[Proof of Theorem~\ref{thm:anytime-back}]
  The proof of Theorem~\ref{thm:anytime-back} is analogous,
  and the only difference is that we use the confidence sequence \eqref{eq:sequence}
  for estimating $P(z)$ as well.
\end{proof}

\section{Proofs for Sect.~\ref{sec:necessity}}
\label{app:proofs-2}

In this appendix we prove Theorem~\ref{thm:necessity}.
We assume, without loss of generality,
that $\mathbf{X}=\mathbf{Y}=\{0,1\}$,
$\tilde x=1$, and $y=1$.
The underlying probability measure $P$ defines the probability distribution of $X_n$
given the past $X_i,Y_i$, $i=1,\dots,n-1$,
and also the probabilities $p_1:=P(Y_n=1\mid X_n=1)$ and $p_0:=P(Y_n=1\mid X_n=0)$.

Theorem~\ref{thm:necessity} will be deduced from the following special case
of Proposition~11 in \cite{Duchi/Haque:2024}.

\begin{proposition}\label{prop:DH}
  Let $(I_n)$ be a $(1-\delta)$-confidence sequence for the Bernoulli model.
  Then, for infinitely many $n$, there exists $(y_1,\dots,y_n)\in\{0,1\}^n$
  such that $\left|I_n(y_1,\dots,y_n)\right|>f(n)$.
\end{proposition}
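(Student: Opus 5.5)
The plan is to argue by contradiction and reduce Proposition~\ref{prop:DH} to a lower bound for sequential tests of a simple Bernoulli null. That lower bound is Farrell's theorem, which is essentially the content of \cite[Proposition~11]{Duchi/Haque:2024}.

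Suppose the conclusion fails. Then there is $n_0$ such that for all $n\ge n_0$ and all $(y_1,\dots,y_n)$ we have $\left|I_n(y_1,\dots,y_n)\right|\le f(n)$. Without loss of generality $f$ can be replaced by a nonincreasing majorant that is still $o(\sqrt{\ln\ln n/n})$. Fix the null value $p_0=1/2$ and define the stopping time
\[
  \tau:=\min\{n\ge n_0: 1/2\notin I_n(\xi_1,\dots,\xi_n)\}.
\]
By the coverage property \eqref{eq:confidence} applied with $p=1/2$, $\P_{1/2}(\tau<\infty)\le\delta$. So $\tau$ is a level-$\delta$ sequential test of $p=1/2$.

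Now take an alternative $q=1/2+\Delta$ with small $\Delta>0$. Let $n(\Delta)$ be the first $n\ge n_0$ with $f(n)<\Delta$. On the event $\{\forall n: q\in I_n\}$, which has $\P_q$-probability at least $1-\delta$, the interval $I_{n(\Delta)}$ contains $q$ and has length less than $\Delta$, so it cannot contain $1/2$. Hence
\[
  \P_q(\tau\le n(\Delta))\ge1-\delta .
\]
Since $f(n)=o(\sqrt{\ln\ln n/n})$, inverting gives $n(\Delta)=o(\ln\ln(1/\Delta)/\Delta^2)$ as $\Delta\downarrow0$. So we obtain a level-$\delta$ test that detects every alternative $1/2+\Delta$ with probability at least $1-\delta$ within $o(\ln\ln(1/\Delta)/\Delta^2)$ steps.

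The remaining, and main, step is to show that such a test cannot exist when $\delta<1/2$; for larger $\delta$ one adjusts the constants. This is Farrell's lower bound, and I would prove it by change of measure combined with the law of the iterated logarithm.

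First, consider a geometric grid of alternatives $\Delta_j=2^{-j}$ and horizons $m_j:=n(\Delta_j)$. For each $j$, the likelihood ratio $\frac{d\P_{q_j}}{d\P_{1/2}}$ on $\mathcal{F}_{m_j}$ is approximately $\exp(2\Delta_j S_{m_j}-2\Delta_j^2 m_j)$, where $S_m=\sum_{i\le m}(2\xi_i-1)/2$.

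The rejection events $A_j:=\{\tau\le m_j\}$ have $\P_{q_j}(A_j)\ge1-\delta$. By change of measure, $\P_{1/2}(A_j)$ is bounded below by roughly $(1-\delta)$ times the $\P_{1/2}$-probability that $S_{m_j}$ exceeds the level $\Delta_j m_j(1+o(1))$. Because $m_j=o(\ln\ln(1/\Delta_j)/\Delta_j^2)$, this level is $o(\sqrt{m_j\ln\ln m_j})$, well below the LIL envelope.

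The key point is that the horizons $m_j$ grow geometrically, so the increments of $S$ over the blocks $(m_{j-1},m_j]$ are nearly independent. The lower-bound half of the LIL (a Kolmogorov-type exponential lower bound) then says that, among $J$ such blocks, $\P_{1/2}$ assigns probability close to $1$ to at least one $j\le J$ with $S_{m_j}\ge\Delta_j m_j$. One must also ensure the likelihood-ratio factor does not kill this event; that is exactly where the slack $o(\ln\ln)$ is used.

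Combining these gives $\P_{1/2}(\tau<\infty)\ge\P_{1/2}(\bigcup_j A_j)$, which I expect to push above $\delta$ and yield the contradiction. The delicate part is making this union bound quantitative: the per-$j$ estimates are each small, roughly $(\ln m_j)^{-(1-\epsilon)}$, so they must be summed over $j$ while controlling their dependence. If this bookkeeping becomes too heavy, I would instead cite \cite[Proposition~11]{Duchi/Haque:2024} directly for this final step, as the paper itself does.
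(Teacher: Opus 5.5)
\textbf{Where the proof breaks.} Your reduction is correct for $\delta<1/2$: $\tau$ is a level-$\delta$ test of $p=1/2$ with $\P_{1/2+\Delta}(\tau\le n(\Delta))\ge1-\delta$ and $n(\Delta)=o(\ln\ln(1/\Delta)/\Delta^2)$. The gap is in the step meant to contradict this.

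Your events $A_j=\{\tau\le m_j\}$ are nested. So $\P_{1/2}(\bigcup_jA_j)=\sup_j\P_{1/2}(A_j)$, and small per-$j$ lower bounds cannot be summed. Change of measure on a single $A_j$ gives only $\P_{1/2}(A_j)\ge\exp(-(\KL_j+\ln2)/(1-\delta))$ with $\KL_j\approx2m_j\Delta_j^2$, and this tends to $0$ whenever $m_j\Delta_j^2\to\infty$.

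The LIL for the null walk does not help either. Knowing that $S_{m_j}\ge\Delta_jm_j$ for some $j$ with high $\P_{1/2}$-probability says nothing about whether $\tau$, an arbitrary stopping time, actually stops there.

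\textbf{The missing idea.} You need to sum over the \emph{disjoint} blocks $D_j:=\{m_{j-1}<\tau\le m_j\}$. For that you must show $\P_{q_j}(\tau\le m_{j-1})\le\delta+o(1)$. This follows from $\P_{1/2}(\tau\le m_{j-1})\le\delta$ only if $\P_{q_j}$ and $\P_{1/2}$ are close in total variation on $\mathcal{F}_{m_{j-1}}$, i.e.\ only if $m_{j-1}\Delta_j^2\to0$.

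Your geometric grid $\Delta_j=2^{-j}$ does not give this. There $m_{j-1}\Delta_j^2=\frac14m_{j-1}\Delta_{j-1}^2$, which for a valid confidence sequence is bounded away from $0$ and may grow. A sparser grid such as $\Delta_j=\Delta_{j-1}/j$ does give it, and still has $\ln\ln(1/\Delta_j)\sim\ln j$. With that grid:
\begin{itemize}
\item $\P_{q_j}(D_j)\ge1-2\delta-o(1)$;
\item the data-processing bound $\P_{1/2}(D)\ge\exp(-(\KL+\ln2)/\P_{q_j}(D))$ on $\mathcal{F}_{m_j}$ gives $\P_{1/2}(D_j)\ge c\,j^{-3\epsilon/(1-2\delta)}$ for large $j$, once $m_j\Delta_j^2\le\epsilon\ln\ln(1/\Delta_j)$;
\item taking $\epsilon<(1-2\delta)/3$ makes the sum diverge, contradicting $\P_{1/2}(\tau<\infty)\le\delta$.
\end{itemize}

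\textbf{The case $\delta\ge1/2$.} Your remark that larger $\delta$ only needs adjusted constants is wrong. For $\delta\ge1/2$, the two properties you extract from $(I_n)$ are shared by a trivial stopping time: equal to $n_0$ if $\xi_1=1$ and $\infty$ otherwise. Its null rejection probability is $1/2\le\delta$, and its power by time $n_0\le n(\Delta)$ is $1/2+\Delta\ge1-\delta$. So no contradiction can follow from those two properties; in the block argument this shows up as the factor $1-2\delta$.

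The paper explicitly allows $1-\delta$ to be arbitrarily close to $0$, so this range matters. Handling it requires using coverage at many values of $p$ at once, not a single test of $p=1/2$. One route:
\begin{itemize}
\item put the uniform prior on $(1/4,3/4)$;
\item use L\'evy's 0--1 law to find a data prefix after which the posterior probability of coverage exceeds $0.9$;
\item run the block argument (with $\delta$ replaced by $0.4$), testing a Lebesgue density point $p_0$ of the set $G$ of $p$ whose conditional coverage is at least $0.6$ against alternatives $q_j\in G$ with $q_j-p_0\in[\Delta_j,2\Delta_j]$.
\end{itemize}

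\textbf{Relation to the paper.} If you fall back on \cite[Proposition~11]{Duchi/Haque:2024}, the testing detour is superfluous. The paper applies that result directly to the confidence sequence. It first restricts to $p\in(1/4,3/4)$ and verifies, by Taylor expansion, the key hypothesis $\KL(B_p,B_{p'})=O((p-p')^2)$ uniformly in $p$. Your proposal omits that check.
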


\begin{proof}
  It suffices to prove the statement of the proposition
  for the Bernoulli submodel in which the probability of success $p$
  is restricted to $p\in(1/4,3/4)$.
  We just need to check the conditions of \cite[Proposition 11]{Duchi/Haque:2024}.
  The key condition is that
  \[
    \KL(B_{p},B_{p'})
    =
    O((p-p')^2),
  \]
  where $\KL$ stands for Kullback--Leibler distance,
  and it follows from Taylor's formula:
  \[
    \KL(B_{p},B_{p+x})
    =
    -p
    \ln
    \left(
      1+\frac{x}{p}
    \right)
    -(1-p)
    \ln
    \left(
      1-\frac{x}{1-p}
    \right)
    =
    \frac{x^2}{2p(1-p)}
    +
    O(x^3),
  \]
  where $O$ is uniform in $p$.
\end{proof}

To prove Theorem~\ref{thm:necessity}, we argue indirectly.
Suppose such a family $(C_N)$ of confidence estimators exists.
First we turn it into a family of finite confidence sequences
for the Bernoulli model.
Define the ``padded intervals''
\[
  I_n^N(y_1,\dots,y_n)
  :=
  C_N(1,y_1,\dots,1,y_n,0,0,\dots,0,0),
  \quad
  n=0,\dots,N
\]
(we apply $C_N$ to the $n$ pairs $(1,y_i)$, $i=1,\dots,n$,
followed by $N-n$ pairs $(0,0)$).

\begin{lemma}
  For each $N$,
  the padded intervals form a confidence sequence of length $N$
  for the Bernoulli model:
  for each $p\in(0,1)$,
  $\P(p\in\cap_n I^N_n(\xi_1,\dots,\xi_n))\ge1-\delta$,
  where $\xi_1,\dots,\xi_N$ are IID and Bernoulli
  with probability of success $p$.
\end{lemma}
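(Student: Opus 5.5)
The plan is to construct, for each fixed $p\in(0,1)$ and $N$, a single probability measure $P$ under the strong interpretation whose data-generating process reproduces the padded sequences, and then apply the validity property \eqref{eq:coverage} of $C_N$ once. The natural choice is an \emph{adaptive} policy for $X$. Set $p_1:=p$ and $p_0:=0$, so that $Y_n=0$ whenever $X_n=0$. The decision rule is: take $X_n=1$ as long as $n\le\tau$, and $X_n=0$ afterwards, where $\tau$ is a stopping time with respect to the past observations. The design must be allowed to depend on the past, which is exactly what the strong interpretation permits.

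The key difficulty is that coverage in \eqref{eq:coverage} is for a single interval $C_N$, while we need the intersection over all $n$ of $I^N_n$. I would handle this by choosing $\tau$ to be the first time the padded interval fails to cover:
\[
  \tau:=\min\{n\in\{0,\dots,N\}: p\notin I^N_n(\xi_1,\dots,\xi_n)\},
\]
with $\tau:=N$ if there is no such $n$. Here $\xi_i$ are the $Y_i$ observed on the steps with $X_i=1$. Whether $\tau=n$ is decided from $\xi_1,\dots,\xi_n$ alone, since $I^N_n$ depends only on these values. So the rule ``play $X_{n+1}=1$ iff $\tau>n$'' is a legitimate function of the past $(X_i,Y_i)_{i\le n}$, and $P$ is a valid measure in the model with $P(y\mid\tilde x)=p_1=p$.

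Under this $P$ the observed sequence is exactly $(1,\xi_1,\dots,1,\xi_\tau,0,0,\dots,0,0)$. This is because $Y_n=0$ a.s.\ when $X_n=0$ (since $p_0=0$), and the $\xi_i$ are IID Bernoulli$(p)$, as $Y_n$ given $X_n=1$ is independent of the past. Hence $C_N$ applied to the data equals $I^N_\tau(\xi_1,\dots,\xi_\tau)$. On the event $\{\exists n: p\notin I^N_n\}$ we have $p\notin I^N_\tau$ by construction, so
\[
  \P\Bigl(p\notin\textstyle\bigcap_n I^N_n\Bigr)\le P\bigl(P(y\mid\tilde x)\notin C_N\bigr)\le\delta.
\]

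One subtlety: the model may require positivity, i.e.\ $p_0\in(0,1)$ or $P(x)>0$. If so, I would take $p_0$ tiny and use coupling, or note that validity is required ``regardless of $P$'', which includes degenerate ones. I expect this technicality to be the main obstacle, together with making sure that the adaptivity of $\tau$ does not break the IID structure of the $\xi_i$. The latter follows because the $\xi$'s can be generated in advance as an IID sequence and revealed one at a time.
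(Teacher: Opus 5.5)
Your proposal is correct and is essentially the paper's proof. Both set $p_1:=p$ and $p_0:=0$, play $X_n=1$ until the first $n$ with $p\notin I^N_n(Y_1,\dots,Y_n)$, and play $X_n=0$ afterwards, so that $C_N$ on the realized data equals the first failing padded interval and \eqref{eq:coverage} gives the bound. Your explicit stopping time $\tau$ and pre-generated IID coupling spell out rigorously what the paper states tersely. The paper simply uses the degenerate $p_0=0$, since validity is required regardless of $P$, so your positivity workaround is not needed there, though the coupling with small $p_0$ (at a cost of at most $Np_0$) would also work.
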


\begin{proof}
  Given $p$, define the underlying probability measure as follows:
  let $p_1:=p$ and $p_0:=0$;
  set $X_n:=1$, $n=1,2,\dots$, and generate $Y_n$
  (setting $Y_n:=1$ with probability $p$)
  until
  \begin{equation}\label{eq:stopping}
    p
    \notin
    I_n^N(Y_1,\dots,Y_n).
  \end{equation}
  As soon as \eqref{eq:stopping} happens,
  start setting $X_n:=0$ (and, therefore, $Y_n:=0$ a.s.).
  It remains to notice that \eqref{eq:stopping} means that \eqref{eq:coverage} is violated.
\end{proof}

Suppose \eqref{eq:goal} is violated for all $(x_1,y_1,\dots,x_N,y_N)$
for arbitrarily large $N$;
fix a sequence $N_j$, $j=1,2,\dots$, of such successful horizons $N$
(they are successful for the confidence estimator:
it achieves accuracy $f$).
Therefore, we always have
\begin{equation*} %
  \left|
    C_{N_j}(x_1,y_1,\dots,x_{N_j},y_{N_j})
  \right|
  \le
  f(x_1+\dots+x_{N_j}).
\end{equation*}
In particular, we have
\begin{equation}\label{eq:I}
  \left|
    I_n^{N_j}(y_1,\dots,y_{n})
  \right|
  \le
  f(n)
\end{equation}
for all padded intervals.

Now let us pass to a limit along subsequences.
The set of all binary strings $y_1,\dots,y_n$ is countable,
and the closed subintervals of $[0,1]$ form a compact set
in the natural topology
(where convergence is defined as the convergence of both end-points).
By a diagonal compactness argument,
there is a subsequence, which we still denote $N_j$,
such that for every $n$ and every $y_1,\dots,y_n$,
\[
  I_n^{N_j}(y_1,\dots,y_n) \to I_n(y_1,\dots,y_n)
\]
for some limit interval $I_n(y_1,\dots,y_n)$.
Namely, we find a subsequence of $N_j$, denoted $N'_j$,
such that $I_n^{N'_j}(y_1,\dots,y_n)$ converges for the first $y_1,\dots,y_n$,
then we find a subsequence $N''_j$ of $N'_j$
such that $I_n^{N''_j}(y_1,\dots,y_n)$ converges for the second $y_1,\dots,y_n$,
etc.;
the resulting subsequence $N_j$ is formed by taking the first element of $N'_j$,
the second element of $N''_j$, etc.

Since $I^{N_j}_n$ form finite confidence sequences,
$I_n$ will form a confidence sequence for the Bernoulli model.
By \eqref{eq:I}, we always have
$
  \left|
    I_n
  \right|
  \le
  f(n)
$.
This contradicts Proposition~\ref{prop:DH}.
\end{document}